\documentclass[journal]{IEEEtran}



\usepackage{setspace}
\usepackage{comment}


\usepackage{graphicx}
\usepackage{amssymb}
\usepackage{amsmath,amsfonts,amssymb,euscript,epsfig,psfrag,
enumerate,float,afterpage, subfigure}%

\newtheorem{thm}{Theorem}

\newtheorem{lem}{Lemma}

\newcommand{\defequiv}{\mbox{\raisebox{-.3ex}{$\overset{\vartriangle}{=}$}}}

\usepackage{cite}


\begin{document}

\title{Optimal Routing with Mutual Information Accumulation in Wireless Networks}

\author{Rahul Urgaonkar, \emph{Member, IEEE}, and  Michael J. Neely, \emph{Senior Member, IEEE} \\
\vspace{-0.1in}
\thanks{Rahul Urgaonkar is with the Network Research Department, Raytheon BBN Technologies,
Cambridge, MA 02138. Michael J. Neely is with the Department
of Electrical Engineering, University of Southern California, Los Angeles, CA
90089. This work was performed when Rahul Urgaonkar was a PhD student at the
University of Southern California. (Email: rahul@bbn.com, mjneely@usc.edu, 
Web: http://www.ir.bbn.com/$\sim$rahul).}
\thanks{This material is supported in part by one or more of the following:
the DARPA IT-MANET program grant W911NF-07-0028, the NSF Career
grant CCF-0747525, NSF grant 0964479, the Network Science Collaborative
Technology Alliance sponsored by the U.S. Army Research Laboratory
W911NF-09-2-0053.}}

\maketitle
\begin{abstract}
We investigate optimal routing and scheduling
strategies for multi-hop wireless networks with \emph{rateless} codes.
Rateless codes allow each node of the network to accumulate
mutual information from every packet transmission. This enables
a significant performance gain over conventional shortest path
routing. Further, it outperforms cooperative communication
techniques that are based on energy accumulation. However, it
requires complex and combinatorial networking decisions concerning
which nodes participate in transmission, and which decode
ordering to use. We formulate three problems of interest in this
setting: (i) minimum delay routing, (ii) minimum energy routing
subject to delay constraint, and (iii) minimum delay broadcast.
All of these are hard combinatorial optimization problems and
we make use of several structural properties of their optimal
solutions to simplify the problems and derive optimal greedy
algorithms. Although the reduced problems still have exponential
complexity, unlike prior works on such problems, our greedy
algorithms are simple to use and do not require solving any linear
programs. Further, using the insight obtained from the optimal
solution to a line network, we propose two simple heuristics
that can be implemented in polynomial time and in a distributed
fashion and compare them with the optimal solution. Simulations
suggest that both heuristics perform very close to the optimal
solution over random network topologies.

\end{abstract}
\begin{keywords}
Mutual Information Accumulation, Rateless Codes, Minimum Delay Routing,
Minimum Energy Routing, Minimum Delay Broadcast
\end{keywords}

\section{Introduction}
\label{section:intro}

Cooperative communication promises significant gains in the performance of wireless networks over
traditional techniques that treat the network as comprised of point-to-point links. 
Cooperative communication protocols exploit the broadcast nature of wireless transmissions and offer 
spatial diversity gains by making use of multiple relays for cooperative transmissions. This can 
increase the reliability and reduce the energy cost of data transmissions in wireless networks.
See \cite{NOW_survey} for a recent comprehensive survey.

Most prior work in the area of cooperative communication has investigated physical layer
techniques such as orthogonal repetition coding/signaling \cite{laneman1}, distributed beamforming \cite{mudumbai}, distributed 
space-time codes \cite{laneman2}, etc. All these techniques perform \emph{energy accumulation} from multiple 
transmissions to decode a packet. In energy accumulation, a receiver can decode a packet when the total
received energy from multiple transmissions of that packet exceeds a certain threshold.
An alternate approach of recent interest is based on \emph{mutual information accumulation} 
\cite{molisch}\cite{icc}. In this approach, a node accumulates
mutual information for a packet from multiple transmissions until it can be decoded
successfully. This is shown to outperform energy accumulation based schemes, particularly
in the high SNR regime, in \cite{molisch}\cite{icc}.
Such a scheme can be implemented in practice using rateless codes of which 
Fountain and Raptor codes \cite{LT, mitzen, Raptor} are two examples. Rateless 
codes encode information bits
into potentially infinite-length codewords. Subsequently, additional parity
bits are sent by the transmitter until the receiver is able
to decode. The decoding procedure at the receiver is carried out using
belief propagation algorithms. For an overview of the working of 
Fountain, LT, and Raptor codes, we
refer to the excellent survey in \cite{mackay}.

In addition to allowing mutual information accumulation, 
rateless codes provide further advantages over traditional  
fixed rate schemes in the context of fading relay networks as discussed in \cite{castura1}\cite{Liu}. 
Unlike fixed rate code schemes in which knowledge of the current channel state information (CSI) is 
required at the transmitters, rateless codes adapt to the channel conditions without requiring CSI. 
This advantage becomes even more important in large networks where the
cost of CSI acquisition grows exponentially with the network size.
However, this introduces  deep
memory in the system because mutual information accumulated from potentially
multiple transmissions in the past can be used to decode a packet.

In this paper, we study three problems on optimal routing and scheduling 
over a multi-hop wireless network using mutual information accumulation. 
Specifically, we first consider a network with a single source-destination pair and 
$n$ relay nodes. When a node transmits, the other nodes accumulate mutual information at a rate
that depends on their incoming link capacity.
All nodes operate under bandwidth and energy constraints as described in detail in
Section \ref{section:model}. We consider three problems in this setting. In the first problem,
the transmit power levels of the nodes are fixed and the objective is to transmit a packet from the source to the destination 
in minimum delay (Section \ref{section:min_delay_routing}). 
In the second problem, the transmit power levels are variable and the objective is to
minimize the sum total energy to deliver a packet to the destination subject to a delay constraint (Section \ref{section:related_problem}). 
In the third problem, we consider the network model with fixed transmit power levels (similar to the first problem) 
and with a single source where the objective is to broadcast a packet to all the other nodes in minimum delay (Section \ref{section:third_problem}).
All of these objectives are important in a variety of networking scenarios.

Related problems of optimal routing in wireless networks with multi-receiver diversity have been studied in 
\cite{teneketzis, laufer09, divbar, dubois} while problems of optimal cooperative diversity routing and broadcasting are
treated in \cite{khandani, sushant, wiopt_10, marjan} and references therein.
Although these formulations incorporate the broadcast nature of wireless transmissions, 
they assume that the outcome of each transmission is a binary success/failure. Further,
any packet that cannot be successfully decoded in one transmission is discarded.
This is significantly different from the scenario considered in this paper where nodes can accumulate
partial information about a packet from different transmissions over time. This can be thought of
as {networking with ``soft'' information}. 

Prior work on accumulating partial information from multiple transmissions
includes the work in \cite{icc, mear, hitch-hiking, maric_broadcast, scaglione, maric_multicast, PAR}.
Specifically, \cite{mear} considers the problem of minimum energy unicast routing in wireless networks
with energy accumulation and shows that it is an NP-complete problem. Similar results are obtained for the
problem of minimum energy \emph{accumulative broadcast} in \cite{hitch-hiking, maric_broadcast, scaglione}.
A related problem of \emph{accumulative multicast} is studied in \cite{maric_multicast}. Minimum energy unicast routing with
energy accumulation only at the destination is considered in \cite{PAR}. 
The work closest to ours is \cite{icc} which treats the minimum delay routing problem
with mutual information accumulation. Both \cite{maric_broadcast}\cite{icc} develop an LP based formulation for their respective
problems that involves
solving a linear program for \emph{every possible ordering} of relay nodes over all subsets of relay nodes to derive the optimal solution.
Thus, for a network with $n$ relay nodes, this exhaustive approach requires solving
$\sum_{m=1}^{n} \binom{n}{m} m! > n!$ linear programs. 

The primary challenge associated with solving the problems addressed in this paper is their inherent combinatorial nature. 
Unlike traditional shortest path routing problems, the cost of 
routing with mutual information accumulation depends not only on the
set of nodes in the routing path, but also their \emph{relative ordering} in the transmission sequence,
making standard shortest path algorithms inapplicable. Therefore, we approach the problem differently.
To derive the optimal transmission strategy for the first problem, we first formulate
an optimization problem in Section \ref{section:formulation} that optimizes over all possible transmission orderings
over all subsets of relay nodes (similar to \cite{maric_broadcast}\cite{icc}). 
This approach clearly has a very high complexity of $O(n!)$.
Then in Section \ref{section:central}, we prove a key structural property of the optimal solution 
that allows us to simplify the problem and derive a simple greedy algorithm that only needs to optimize over all subsets of nodes. 
Further, it does not require solving any linear programs.
Thus, it has a complexity of $O(2^n)$. We derive a greedy algorithm of the same complexity for the second problem in 
Section \ref{section:related_problem}. We note that this complexity, while still exponential, is a significant improvement over solving $O(n!)$ linear programs.
For example, with $n=10$, this requires $2^{10} = 1024$ runs of a simple greedy algorithm 
as compared to  $10! = 3628800$ runs of an LP solver. 
Note that for small networks, (say, $n \leq 10$), it is reasonable to use our algorithm to exactly compute the optimal solution.
Further, for larger $n$ it provides a feasible way to compute the
optimal solution as a benchmark when comparing against simpler heuristics.

For the minimum delay broadcast problem, we identify a similar structural property of the optimal solution in 
Section \ref{section:third_problem} that allows us to simplify the problem and derive a simple greedy algorithm.
While this greedy algorithm still has a complexity of $O(n!)$, it does not require solving any linear programs and thus
improves over the result in \cite{maric_broadcast} that requires solving $n!$ linear programs.
In general, we expect all these problems to be NP-complete based on the results in \cite{mear, hitch-hiking, maric_broadcast, scaglione}.
For the special case of a line topology,  we derive the exact optimal solution in Section \ref{section:line_network}. 
Finally, in Section \ref{section:distributed}, we propose two simple heuristics that can be implemented in
polynomial time and in a distributed fashion and compare them with the optimal solution.
Simulations suggest that both heuristics perform quite close to the optimal solution over
random network topologies.

Before proceeding, we note that the techniques we apply to get these
structural results can also be applied to similar 
problems that use energy accumulation instead of mutual information accumulation.

\begin{figure}
\centering
\includegraphics[height=4cm, width=6.5cm, angle=0]{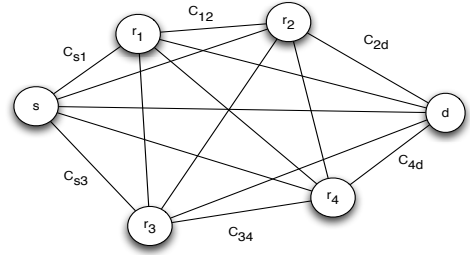}
\caption{Example network with source, destination and $4$
relay nodes. When a node transmits, every other node that has not yet decoded
the packet accumulates mutual information at a rate given by the capacity of the link between the transmitter and that node.}
\label{fig:one}
\end{figure}

\section{Network Model}
\label{section:model}


The network model consists of a source $s$, destination $d$ and $n$ relays $r_1, r_2, \ldots, r_n$ as shown in Fig. \ref{fig:one}. 
There are no time variations in the channel states. 
This models the scenario where the 
coherence time of the channels is larger than any considered
transmission time of the encoded bits.
In the first two problems, the source has a packet to be delivered to the destination. 
In the third problem, the source packet must to delivered to all nodes in the network.

Each node $i$ transmits at a fixed power spectral density (PSD) $P_i$ (in units of joules/sec/Hz) that is uniform across
its transmission band. However, the transmission duration for a node is variable and is a design parameter. The total 
available bandwidth is $W$ Hz. 
A node can transmit the packet only if it has fully decoded the packet. For this, it must accumulate 
\emph{at least} ${I}_{max}$ bits of total mutual information. 

All transmissions happen on orthogonal channels in time or frequency and at most one node can
transmit over a frequency channel at any given time. 
The channel gain between nodes $i$ and $j$ is given by $h_{ij}$ {which is independent of the frequency channel}.
However, this is not necessarily known at the transmitting nodes.

Under this assumption, the minimum transmission time under the two orthogonal schemes 
(where nodes transmit in orthogonal time vs. frequency channels) is the same. 
In the following, we will focus on the case where transmissions are orthogonal in time.
When a node $i$ transmits, every other node $j$ that does not have the full packet yet, receives mutual information at a rate that depends
on the transmission capacity $C_{ij}$ (in units of bits/sec/Hz) of link $i-j$. This transmission capacity itself 
depends on the transmit power $P_i$ and channel strength $h_{ij}$. For example, for an AWGN channel, using Shannon's formula,
this is given by $C_{ij} = \log_2\Big[1 + \frac{h_{ij} P_i}{N_0}\Big]$ where $N_0/2$ is the PSD of the noise process. 
Specifically, if node $i$ transmits for duration $\Delta$ over bandwidth $W$, then node $j$ accumulates $\Delta W C_{ij}$  bits of
information. In the following, we assume $W=1$ for simplicity.
We assume that nodes use independently 
generated \emph{ideal} rateless codes so that the
mutual information collected by a node from different transmissions add up. 
We can incorporate the non-idealities of practical rateless codes by
multiplying $C_{ij}$ with a factor $1/(1 + \epsilon)$ where $\epsilon \geq 0$ is the overhead. A similar model has been considered in \cite{icc}.

\section{Minimum Delay Routing}
\label{section:min_delay_routing}

Under the modeling assumptions discussed in Section \ref{section:model},  the problem of routing a packet from the 
source to the destination with minimum delay consists of the following sub-problems: 
\begin{itemize}
\item First, which subset of relay nodes should take part in forwarding the packet? 
\item Second, in what order should these nodes transmit? 
\item And third, what should be the transmission durations for these nodes? 
\end{itemize}
We next discuss the transmission structure of a general policy under this model.

\subsection{Timeslot and Transmission Structure}
\label{section:timeslot}

\begin{figure}
\centering
\includegraphics[height=3.5cm, width=8.5cm, angle=0]{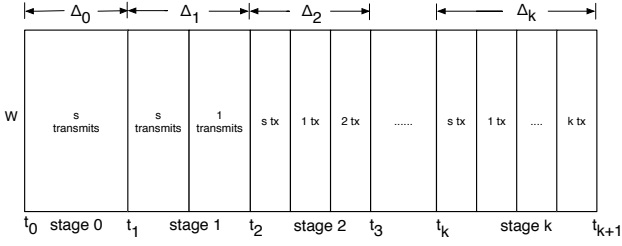}
\caption{Example timeslot and transmission structure. 
In each stage, nodes that have already decoded the full packet
transmit on orthogonal channels in time.}
\label{fig:two}
\vspace{-0.2in}
\end{figure}

Consider any transmission strategy $\mathcal{G}$ for routing the packet to the destination in the model described above. 
This includes the choice of the relay set, the transmission order for this set, and the transmission
durations for each node in this set.
Let $\mathcal{R}$ denote the subset of relay nodes that take part in the routing process
under strategy $\mathcal{G}$. By this, we mean that each node in 
$\mathcal{R}$ is able to decode the packet before the destination and then transmits for a non-zero duration. 
There could be other nodes that are able to decode the packet before the
destination, but these do not take part in the forwarding process and are therefore not included in the set $\mathcal{R}$. 

Let $k = |\mathcal{R}|$ be the size of this set. 
Also, let $\mathcal{O}$ be the ordering of nodes in $\mathcal{R}$
that describes the sequence in which nodes in $\mathcal{R}$ successfully decode the packet under strategy $\mathcal{G}$.
Without loss of generality, let the relay nodes in the ordering $\mathcal{O}$ be indexed
as $1, 2, 3, \ldots, k$. 
Also, let the source $s$ be indexed as $0$ and the destination $d$ be indexed as $k+1$. 
Initially, only the source has the packet.
Let $t_0$ be the time when it starts its transmission and let $t_1, t_2, \ldots, t_k$ denote the times 
when relays $1, 2, \ldots, k$ in the ordering $\mathcal{O}$ accumulate enough mutual information to decode the packet.
Also, let $t_{k+1}$ be the time when the destination decodes the packet.
By definition, $t_0 \leq t_1 \leq t_2 \leq \ldots \leq t_k \leq t_{k+1}$.
We say that the transmission occurs over $k+1$ stages,
where stage $j, j \in \{ 0, 1, 2, \ldots, k\}$ represents the interval $[t_j, t_{j+1}]$.
The state of the network at any time is given by the set of nodes that have the full packet and the
mutual information accumulated so far at all the other nodes.
Note that in any stage $j$, the first $j$ nodes in the ordering 
$\mathcal{O}$ and the source have the fully decoded packet. Thus, any subset of these nodes (including potentially all of them) 
may transmit during this stage. Then the time-slot structure for the transmissions can be depicted as in Fig. \ref{fig:two}.
Note that in each stage, the set of relays that have
successfully decoded the packet increases by one (we ignore those relays that are not part of the set $\mathcal{R}$).

We are now ready to formulate the problem of minimum delay routing with mutual information accumulation.

\subsection{Problem Formulation}
\label{section:formulation}

For each $j$, define the duration of stage $j$ as $\Delta_j = t_{j+1} - t_j$. Also, let $A_{ij}$ denote the transmission 
duration for node $i$ in stage $j$ under strategy $\mathcal{G}$. 
Note that $A_{ij} = 0$ if $i > j$, else $A_{ij} \geq 0$. This is because node $i$ does not have the full packet until the end of stage $i-1$.
The total time to deliver the packet to the destination $T_{tot}$ is given by 
$T_{tot} = t_{k+1} - t_0 = \sum_{j = 0}^k \Delta_j$.
For any transmission strategy $\mathcal{G}$ that uses the subset of relay nodes $\mathcal{R}$ with an ordering $\mathcal{O}$, the minimum delay is 
given by the solution to the following optimization problem:
\begin{align}	
\textrm{Minimize:} \; & T_{tot} = \sum_{j = 0}^k \Delta_j \nonumber \\
\textrm{Subject to:}  \; & \sum_{i=0}^{m-1} \sum_{j=0}^{m-1} A_{ij} C_{im} \geq {I}_{max} \; \forall m \in \{1, 2, \ldots, k+1\} \nonumber \\
& \sum_{i=0}^{j} A_{ij} \leq \Delta_j \; \forall j \in \{0, 1, 2, \ldots, k\} \nonumber \\
& A_{ij} \geq 0 \; \forall i \in \{0, 1, 2, \ldots, k\}, j \in \{0, 1, 2, \ldots, k\} \nonumber \\
& A_{ij} = 0 \; \forall i > j \nonumber\\
& \Delta_{j} \geq 0 \; \forall j \in \{0, 1, 2, \ldots, k\}
\label{eq:min_delay}
\end{align}
Here, the first constraint captures the requirement that node $m$ in the ordering  must accumulate at least ${I}_{max}$ amount of mutual information
by the end of stage $m-1$ using all transmissions in all stages up to stage $m-1$. 
The second constraint means that in every stage $j$, the total transmission time for all nodes that have the
fully decoded packet in that stage cannot exceed the length of that stage.

It can be seen that the above problem is a linear program and thus can be solved efficiently for a given
relay set $\mathcal{R}$ and its ordering $\mathcal{O}$. Indeed, this is the approach
taken in \cite{icc} that proposes solving such a linear program for
\emph{every possible ordering of relays} for each subset of the set of relay nodes.
While such an approach is guaranteed to find the optimal solution, 
it has a huge computational complexity of $O(n!)$ linear programs.
In the next section, we show that the above computation can be 
simplified significantly by making use of a structural property of the optimal solution.

\subsection{Characterizing the Optimal Solution of (\ref{eq:min_delay})}
\label{section:central}

\begin{figure}
\centering
\includegraphics[height=3.5cm, width=8.5cm, angle=0]{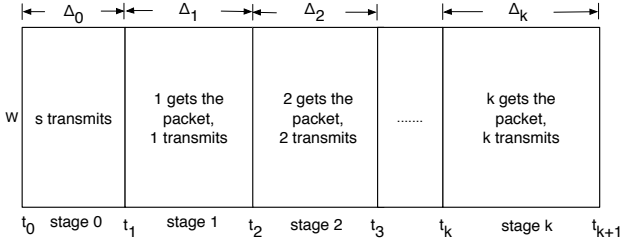}
\caption{Optimal timeslot and transmission structure. 
In each stage, only the node that decodes the packet at the beginning
of that stage transmits.}
\label{fig:three}
\vspace{-0.2in}
\end{figure}

Let $\mathcal{R}_{opt}$ denote the subset of relay nodes that take part in the routing process in the optimal solution. Let 
$k = |\mathcal{R}_{opt}|$ be the size of this set. Also, let $\mathcal{O}_{opt}$ be the optimal ordering.
Note that, by definition, each node in $\mathcal{R}_{opt}$ transmits for a non-zero duration (else, we can remove it from the set without
affecting the minimum total transmission time). 
Then, we have the following:

\begin{thm}
Under the optimal solution to the minimum delay routing problem (\ref{eq:min_delay}), in each stage $j$,
it is optimal for only one node to transmit, and that node is node $j$. 
\label{thm:min_delay}
\end{thm}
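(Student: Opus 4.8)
The plan is to prove the theorem in two logically separate steps. The statement claims two things about the optimal solution: first, that in each stage $j$ only a single node transmits (rather than multiple nodes overlapping in the schedule), and second, that the single transmitter is precisely node $j$ — the node that just finished decoding at the start of stage $j$. I will argue both by an exchange/contradiction argument that manipulates a hypothetical optimal schedule into one of the claimed form without increasing $T_{tot}$.

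**First I would** establish the ``one transmitter per stage'' claim. Recall that in stage $j$ the nodes eligible to transmit are indices $0,1,\ldots,j$ (those that have decoded by the start of the stage), and the only constraint coupling them is $\sum_{i=0}^{j} A_{ij} \le \Delta_j$, together with the mutual-information accumulation constraints $\sum_{i=0}^{m-1}\sum_{j=0}^{m-1} A_{ij} C_{im} \ge I_{max}$. The key observation is that for the \emph{next} decoding node $m=j+1$, all that matters from stage $j$ is the weighted contribution $\sum_{i=0}^{j} A_{ij} C_{i,j+1}$ toward its threshold, and more generally each future node $m$ cares only about $\sum_{i} A_{ij} C_{im}$. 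Within a fixed stage length $\Delta_j$, the eligible node $i^*$ with the \emph{largest} incoming capacity $C_{i^*,j+1}$ to the next-to-decode node is the most ``efficient'' transmitter for advancing decoding. I would argue that replacing any multi-node schedule in stage $j$ by having only the single most useful node transmit for the full $\Delta_j$ can only help; since the stages are processed sequentially and the objective merely sums the $\Delta_j$, concentrating transmission on the best node lets each threshold be met in shorter or equal time. This reduces the schedule to one transmitter per stage.

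**Next I would** pin down that this single transmitter must be node $j$ itself. Here the crucial structural fact is the ordering: by definition of $\mathcal{O}_{opt}$, nodes decode in the order $1,2,\ldots,k$, and node $j$ is the \emph{most recently informed} node at the start of stage $j$. The natural claim to establish is that node $j$ has the strongest link to the next node $j+1$ in the decoding sequence — otherwise the ordering itself would not be optimal, because a node with a stronger onward link ``should have'' decoded later or been the natural forwarder. I would formalize this by a swap argument: if some earlier node $i<j$ were the better transmitter in stage $j$, then either we could have used $i$ earlier (contradicting optimality of the stage durations) or we could relabel the decode order to move the more useful node adjacent to $j+1$, contradicting the optimality of $\mathcal{O}_{opt}$. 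Combining the single-transmitter reduction with this identification yields exactly the ``node $j$ transmits alone in stage $j$'' conclusion depicted in Fig.~\ref{fig:three}.

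**The hard part will be** the second step — rigorously justifying that the lone transmitter is node $j$ rather than merely \emph{some} node. The first reduction (one transmitter) follows from a fairly clean convexity/efficiency argument internal to a single stage, but the identification of that transmitter with the freshly-decoded node couples the stage decomposition to the global optimality of the ordering $\mathcal{O}_{opt}$, and one must be careful that the exchange does not violate the causality constraints $A_{ij}=0$ for $i>j$ or disturb the accumulated information at downstream nodes $m>j+1$. I expect the clean way around this is an induction on stages: assuming the schedule is in canonical form up through stage $j-1$ (so node $j$ decoded using exactly node $j-1$'s transmission), show that making node $j$ the sole transmitter in stage $j$ preserves feasibility for all remaining thresholds while not lengthening $T_{tot}$, using the fact that under the optimal ordering node $j$'s onward capacity dominates. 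The delicate bookkeeping of the partial information already accumulated at future nodes from earlier stages is where the argument needs the most care.
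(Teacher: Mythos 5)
There are two genuine gaps, one in each step of your plan.

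\textbf{Gap in the one-transmitter reduction.} Your exchange argument picks the eligible node $i^*$ maximizing $C_{i^*,j+1}$ and claims that giving it the whole stage ``can only help.'' This does not follow: the stage-$j$ transmissions feed \emph{every} not-yet-decoded node $m>j$, so concentrating time on the node with the best link to node $j+1$ can strictly reduce the information accumulated at nodes $j+2,\ldots,d$, forcing later stages to lengthen and possibly increasing $T_{tot}$. The coupling to downstream stages is exactly what makes the claim nontrivial, and your proposed fix (``a convexity/efficiency argument internal to a single stage'') cannot work because the effect is not internal to a single stage. The paper resolves this by keeping all downstream threshold constraints, writing them as a lower-triangular linear system $\mathbf{B}+\mathbf{C\Delta}=\mathbf{I}$ with nonzero diagonal, inverting to express $\Delta_{j+1},\ldots,\Delta_k$ as linear functions of the stage-$j$ allocation $(\beta_i)$, and removing the inactive constraints $\Delta_{j+1}\geq 0,\ldots,\Delta_k\geq 0$ (Lemma \ref{lem:inactive_const}). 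The total remaining time then becomes a single linear objective in $(\beta_i)$ subject to one equality constraint, so an extreme point --- all weight on one $\beta_i$ --- is optimal. Note the winning index comes out of that linear objective, which accounts for all downstream effects; it is \emph{not} in general the maximizer of $C_{i,j+1}$.

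\textbf{Gap in identifying the transmitter as node $j$.} Your second step rests on the claim that node $j$ has the strongest onward link to node $j+1$ (or that its ``onward capacity dominates''), and on relabeling the decode order otherwise. That claim is unsupported and false in general: the decoding order only tells you that node $j+1$ is the first \emph{remaining} node to reach its threshold when node $j$ transmits, i.e., $j+1$ is the best receiver for transmitter $j$, not that $j$ is the best transmitter toward $j+1$. The paper's identification needs no link-quality comparison at all; it is definitional plus \emph{backward} induction. Every node of $\mathcal{R}_{opt}$ transmits for a nonzero duration by definition; node $j$ cannot transmit before stage $j$ (it has no packet, the causality constraint $A_{ij}=0$ for $i>j$); and the backward induction hypothesis --- in every stage after $j$, only the freshly-decoded node transmits --- rules out transmission after stage $j$. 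Hence if node $j$ is not the sole stage-$j$ transmitter, it never transmits, contradicting $j\in\mathcal{R}_{opt}$. Your induction runs \emph{forward} (canonical form up through stage $j-1$), which makes this argument unavailable: to pin down the stage-$j$ transmitter you need the structure of the \emph{later} stages, which a forward induction has not yet established. Reversing the direction of your induction, and replacing both heuristic link-comparison arguments with the LP-vertex argument and the ``else it never transmits'' contradiction, is what closes the proof.
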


Fig. \ref{fig:three} shows the timeslot structure under the optimal solution. 
The above theorem shows that only one node transmits in each stage, and that the optimal transmission
ordering is the same as the ordering that nodes in the set $\mathcal{R}_{opt}$ decode
the packet. Comparing this with the
general timeslot structure in Fig. \ref{fig:two}, it can be seen that 
Theorem \ref{thm:min_delay} simplifies problem ($\ref{eq:min_delay}$) significantly. Specifically,
Theorem \ref{thm:min_delay} implies that, \emph{given the optimal relay set} $\mathcal{R}_{opt}$, the optimal transmission structure 
(i.e., the decoding order and the transmission durations) 
can be computed in a \emph{greedy} fashion as follows. First, the source starts to transmit and continues to do so until any relay node in this set
gets the packet. Once this relay node gets the packet, we know from Theorem \ref{thm:min_delay} that the source does not transmit in 
any of the remaining stages. 
This node then starts to transmit until another node in the set gets the packet. 
This process continues until the destination is able to decode the packet.
The optimal solution to ($\ref{eq:min_delay}$) can then be obtained by applying this greedy
transmission strategy to all subsets of relay nodes and picking one that yields the minimum delay.\footnote{We 
note that the transmission structure characterized by Theorem \ref{thm:min_delay} is similar to the
\emph{wavepath property} shown in \cite{mear} for the problem of minimum energy unicast routing with
energy accumulation in wireless networks. However, our proof technique is significantly different.}  
Note that applying this greedy transmission strategy does not require solving an LP.
While searching over all subsets still has an exponential complexity of $O(2^n)$, 
it can be used to compute the optimal solution as a benchmark. 

Theorem \ref{thm:min_delay} also implies that multiple copies of the packet need not be maintained across the network. For example, note that
the source need not transmit after the first relay has decoded the packet and therefore can drop the packet from its queue.

We emphasize that the optimal transmission structure suggested by Theorem \ref{thm:min_delay} is not obvious.
For example, at the beginning of any stage, the newest addition to the set of relay nodes with the full packet may not
have the best links (in terms of transmission capacity) to all the remaining nodes, including the destination. 
This would suggest that under the optimal solution, in general in each stage, nodes with the full packet should
take turns transmitting the packet. However, Theorem \ref{thm:min_delay} states that such time-sharing is
not required.

Before proceeding, we present a preliminary lemma that is used in the proof of Theorem \ref{thm:min_delay}.
Consider any linear program: 
\begin{align}	
\textrm{Minimize:} \qquad & c^T x \nonumber \\
\textrm{Subject to:}  \qquad & Ax = b \nonumber \\
& x \geq 0
\label{eq:inactive}
\end{align}
where $x \in \mathbf{R}^n$. 
Then we have the following:
\begin{lem}
Let $x^*$ be an optimal solution to the problem (\ref{eq:inactive}) 
such that $x^* > 0$ (where the inequality is taken entry wise). Then 
$x^*$ is still an optimal solution when the constraint $x \geq 0$ is removed.
\label{lem:inactive_const}
\end{lem}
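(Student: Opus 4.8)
The plan is to argue by contradiction, exploiting the fact that strict positivity $x^* > 0$ makes every nonnegativity constraint \emph{inactive} at $x^*$. When the constraint $x \geq 0$ is dropped, the feasible set becomes the affine subspace $\{x : Ax = b\}$, on which the objective $c^T x$ is affine. The guiding intuition is that in a small neighborhood of $x^*$ the nonnegativity constraints impose nothing, so optimality of $x^*$ among \emph{nearby} feasible points of the original problem already forces optimality for the relaxed problem; and for a linear objective, local optimality on an affine set is equivalent to global optimality.

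Concretely, suppose for contradiction that $x^*$ is not optimal for the relaxed problem. Then there is a point $\hat{x}$ with $A\hat{x} = b$ and $c^T \hat{x} < c^T x^*$. Consider the segment $x(t) = (1-t)x^* + t\hat{x}$ for $t \in [0,1]$. Since both endpoints satisfy $Ax = b$, linearity gives $Ax(t) = b$ for every $t$, while the objective is
\[
c^T x(t) = c^T x^* + t\,(c^T \hat{x} - c^T x^*),
\]
which is strictly decreasing in $t$. Because $x^* > 0$ entrywise, there is an $\epsilon > 0$ with $x > 0$ whenever $\|x - x^*\| < \epsilon$; choosing $t > 0$ small enough that $t\,\|\hat{x} - x^*\| < \epsilon$ yields $x(t) > 0$. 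Hence $x(t)$ satisfies both $Ax(t) = b$ and $x(t) \geq 0$, so it is feasible for the \emph{original} problem, yet $c^T x(t) < c^T x^*$. This contradicts the optimality of $x^*$ for the original problem, establishing the lemma.

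The only step requiring any care is the interpolation: one must check both that the affine interpolant preserves the equality $Ax = b$ (immediate) and that the strict positivity of $x^*$ guarantees $x(t)$ stays feasible for all sufficiently small $t > 0$; this is where the hypothesis $x^* > 0$, rather than merely $x^* \geq 0$, is essential. An equivalent route goes through the KKT conditions: complementary slackness together with $x^* > 0$ forces the multipliers on $x \geq 0$ to vanish, so stationarity reduces to $c = A^T \lambda$ for some $\lambda$, which is exactly the optimality certificate for the relaxed problem and moreover shows $c^T x = \lambda^T b$ is constant on the affine set. I would present the contradiction argument as the main proof, since it is elementary and self-contained, and note the KKT viewpoint only as a remark.
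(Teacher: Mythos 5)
Your proof is correct and follows essentially the same argument as the paper's: both proceed by contradiction, take a convex combination of $x^*$ with the supposedly better point $\hat{x}$, and use the strict positivity of $x^*$ to find a point on that segment that remains feasible for the original problem yet has strictly smaller objective value. The only cosmetic difference is that the paper phrases the choice of interpolation parameter via intersecting a ball around $x^*$, whereas you choose $t$ small directly; your added KKT remark is a nice aside but not needed.
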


Lemma \ref{lem:inactive_const} implies that removing an \emph{inactive} constraint does not affect the optimal solution of the linear program. 
This is a simple fact and its proof is provided for completeness in Appendix A.


\begin{proof} \emph{(Theorem \ref{thm:min_delay}):} Note that 
Theorem \ref{thm:min_delay} trivially holds in stage $0$ (since only the source has the full packet in this stage).
Next, it is easy to see that in the last stage (i.e., stage $k$), only the node with the \emph{best} link (in terms of transmission
capacity) to the destination in the set $\mathcal{R}_{opt}$ should transmit in order to minimize
the total delay. This is because this node will take the smallest time to transmit the remaining amount of 
mutual information needed by $d$ to decode the packet.
Further, we claim that this node must be the node $k$ in the ordering $\mathcal{O}_{opt}$. This can be argued as follows.
Assume that the node with the best link to the destination in the set $\mathcal{R}_{opt}$ has the full packet 
at some stage $(k-j)$ (where $0 < j < k$) \emph{before} the start of stage $k$. 
Then a smaller delay can be achieved by having only this node transmit after it has decoded the full packet from that
stage onwards. Thus, the other nodes labeled $k-j, \ldots, k-1$ in the transmission order do not transmit, a contradiction.
This shows that under the optimal solution, in the last stage $k$, only node $k$ in the ordering $\mathcal{O}_{opt}$ transmits.
Using induction, we now show that in every prior stage $(k-j)$ where $1 \leq j \leq k-1$, only one node needs to transmit and that
this node must be node $k-j$ in the ordering $\mathcal{O}_{opt}$.

Consider the $(k-1)^{th}$ stage. At time $t_{k-1}$, all nodes except $k$ and $d$ have decoded the packet. Let the mutual information
state at nodes $k$ and $d$ at time $t_{k-1}$ be ${I}_{k}(t_{k-1})$ and ${I}_d(t_{k-1})$ respectively. 
Also, suppose in the $(k-1)^{th}$ stage,
relay nodes $1, 2, \ldots, k-1$ and the source transmit a fraction $\alpha_1^{k-1}, \alpha_2^{k-1}, \ldots, \alpha_{k-1}^{k-1}$ and 
$\alpha_0^{k-1} $ of the total duration of stage $(k-1)$, i.e., $\Delta_{k-1}$ respectively. Note that these
fractions must add to $1$ since it is suboptimal to have any idle time (where no one is transmitting).
Then, the optimal solution must solve the following optimization problem:
\begin{align}
\textrm{Minimize:} \; & \Delta_{k-1} + \Delta_k \nonumber \\
\textrm{Subject to:}  \; & {I}_k(t_{k-1}) + \Delta_{k-1} \sum_{i=0}^{k-1} \alpha_i^{k-1} C_{ik} \geq {I}_{max} \nonumber \\
& {I}_d(t_{k-1}) + \Delta_{k-1} \sum_{i=0}^{k-1} \alpha_i^{k-1} C_{id} + \Delta_k C_{kd} \geq {I}_{max} \nonumber \\
& 0 \leq \alpha_0^{k-1} , \alpha_1^{k-1}, \ldots, \alpha_{k-1}^{k-1} \leq 1 \nonumber \\
& \sum_{i=0}^{k-1} \alpha_i^{k-1} = 1 \nonumber \\
& \Delta_{k-1} \geq 0,  \Delta_k \geq 0 
\label{eq:stagek-1-1}
\end{align}
Here, the first constraint states that relay $k$ must accumulate at least ${I}_{max}$ bits of mutual information
by the end of stage $(k-1)$. 
The second constraint states that the destination must accumulate at least ${I}_{max}$ bits of mutual information
by the end of stage $k$.
Note that in the last term of the left hand side of the second constraint, we have used the fact that only node $k$ transmits during stage $k$.

It is easy to see that under the optimal solution, the first and second constraints must be met with equality.
This simply follows from the definition of the beginning of any stage $j$ as the time when node $j$ has just decoded the packet. 
Next, let $\beta_i = \Delta_{k-1}\alpha_i^{k-1}$ for all $i \in \{0, 1, 2, \ldots, k-1\}$. 
Since $\sum_{i=0}^{k-1} \alpha_i^{k-1} = 1$, we have that $\sum_{i=0}^{k-1} \beta_i = \Delta_{k-1}$ and (\ref{eq:stagek-1-1}) is equivalent to:
\begin{align}
\textrm{Minimize:} \; & \sum_{i=0}^{k-1}\beta_i + \Delta_k \nonumber \\
\textrm{Subject to:}  \; & {I}_k(t_{k-1}) +  \sum_{i=0}^{k-1} \beta_i C_{ik} = {I}_{max} \nonumber \\
& {I}_d(t_{k-1}) + \sum_{i=0}^{k-1} \beta_i C_{id}  + \Delta_k C_{kd} = {I}_{max} \nonumber \\
& \Delta_k \geq 0, \beta_i \geq 0 \qquad \forall i \in \{0, 1, 2, \ldots, k-1\}
\label{eq:stagek-1-3}
\end{align}
Note that problems (\ref{eq:stagek-1-1}) and (\ref{eq:stagek-1-3})
are equivalent because we can transform (\ref{eq:stagek-1-3}) to the original problem by using the relations
$\Delta_{k-1} = \sum_{i=0}^{k-1} \beta_i$ and $\alpha_i^{k-1} = \frac{\beta_i}{\Delta_{k-1}}$. The 
degenerate case where $\Delta_{k-1} = 0$ does not arise because if $\Delta_{k-1} = 0$, then no node transmits in stage $(k-1)$ 
and we transition to stage $k$ in which
only node $k$ transmits. This means node $k-1$ never transmits, 
contradicting the fact that it is part of the optimal 
transmission schedule. 

Since we know that under the optimal solution, $\Delta_k > 0$, we can remove the constraint
$\Delta_k \geq 0$ from (\ref{eq:stagek-1-3}) without affecting the optimal solution (using Lemma \ref{lem:inactive_const}).
Next we multiply the minimization objective in (\ref{eq:stagek-1-3}) by $C_{kd}$ without changing the problem.
Then, using the second equality constraint to eliminate $\Delta_k$ from the objective and ignoring the constant terms, 
(\ref{eq:stagek-1-3}) can be expressed as:
\begin{align}
\textrm{Minimize:} \qquad & \sum_{i=0}^{k-1}\beta_i(C_{kd} - C_{id}) \nonumber \\
\textrm{Subject to:}  \qquad & {I}_k(t_{k-1}) +  \sum_{i=0}^{k-1} \beta_i C_{ik}  = {I}_{max} \nonumber \\
& \beta_i \geq 0 \qquad \forall i \in \{0, 1, 2, \ldots, k-1\} 
\label{eq:stagek-1-4}
\end{align}
This optimization problem is linear in $\beta_i$ with a single linear equality constraint and thus 
the solution is of the form where all except one $\beta_i$ are zero.
Since $\alpha_i^{k-1} = \frac{\beta_i}{\Delta_{k-1}}$, we have that in the optimal solution, exactly one of the fractions 
$\alpha_0^{k-1} , \alpha_1^{k-1}, \ldots, \alpha_{k-1}^{k-1}$
is equal to $1$ and rest must be $0$. This implies that only one node transmits in this stage. Further, 
this node must be the relay node $k-1$ that decoded the packet at the beginning of this stage. Else,
node $k-1$ never transmits. This is because by definition of stage $(k-1)$, node $k-1$ does not have the
packet before the
beginning of stage $(k-1)$ and hence cannot transmit before stage $(k-1)$.
Since only node $k$ transmits when stage $(k-1)$
ends, if node $k-1$ is not the node chosen for stage $(k-1)$, it never
transmits, contradicting the fact that it is part of the optimal set.
\footnote{This is a crucial property 
that holds only for the unicast routing case. As we will see in Section \ref{section:third_problem}, this does not
necessarily hold for the minimum delay broadcast problem.}

\begin{center}
\begin{table*}
{\small
\begin{displaymath}
\left[ \begin{array}{ccc}
\sum_{i=0}^{k-j}\beta_i C_{i,k-j+2} \\
\sum_{i=0}^{k-j}\beta_i C_{i,k-j+3} \\
\vdots \\
\sum_{i=0}^{k-j}\beta_i C_{i,d}
\end{array} \right]
+
\left[ \begin{array}{ccc}
C_{k-j+1, k-j+2} & \ldots & 0\\
C_{k-j+1, k-j+3} & \ldots & 0\\
\vdots 		 & \ddots & \vdots\\
C_{k-j+1, d} 	 & \ldots & C_{k, d}\\
\end{array} \right]
\left[ \begin{array}{cccc}
\Delta_{k-j+1} \\
\Delta_{k-j+2} \\
\vdots \\
\Delta_{k}
\end{array} \right]
= \left[ \begin{array}{cccc}
{I}_{max} - {I}_{k-j+2}(t_{k-j}) \\
{I}_{max} - {I}_{k-j+3}(t_{k-j}) \\
\vdots \\
{I}_{max} - {I}_{d}(t_{k-j}) \\
\end{array} \right]
\end{displaymath}
\small}
\label{table:matrix_form}
\caption{Second set of constraints in (\ref{eq:stagek-j-2}) in Matrix form.} 
\vspace{-0.2in}
\end{table*}
\end{center}

Now consider the $(k-j)^{th}$ stage and suppose Theorem \ref{thm:min_delay} holds for all stages after stage $(k-j)$ where $2 \leq j \leq k-1$. 
This means that in every stage after stage $(k-j)$, only the node that has just decoded the packet transmits.
At time $t_{k-j}$, all nodes except $k-j+1, k-j+2, \ldots, k$ and $d$ have decoded the packet. 
Let the mutual information
state at these nodes at time $t_{k-j}$ be ${I}_{k-j+1}(t_{k-j}), {I}_{k-j+2}(t_{k-j}), \ldots, {I}_{k}(t_{k-j})$ and 
${I}_{d}(t_{k-j})$, respectively. 
Also, suppose in the $(k-j)^{th}$ stage,
the source and the relay nodes $1, 2, \ldots, k-j$ transmit a fraction $\alpha_0^{k-j}, \alpha_1^{k-j}, \alpha_2^{k-j}, \ldots, \alpha_{k-j}^{k-j}$
of the total duration of stage $(k-j)$, i.e., $\Delta_{k-j}$ respectively. 
Then, the optimal solution must solve the following optimization problem:
\begin{align}
&\textrm{Minimize:} \; \sum_{m=0}^j \Delta_{k-j+m} \nonumber \\
& \textrm{Subject to:} \; {I}_{k-j+1}(t_{k-j}) + \Delta_{k-j}\Big[ \sum_{i=0}^{k-j} \alpha_i^{k-j} C_{i,k-j+1} \Big] = {I}_{max} \nonumber \\
& {I}_{k-j+n}(t_{k-j}) + \Delta_{k-j}\Big[ \sum_{i=0}^{k-j} \alpha_i^{k-j} C_{i,k-j+n} \Big] \nonumber\\
& + \sum_{i=1}^{n-1} \Delta_{k-j+i} C_{k-j+i, k-j+n}  = {I}_{max} \; \forall n \in \{2, \ldots, j+1\} \nonumber \\
& 0 \leq \alpha_0^{k-j} , \alpha_1^{k-j}, \ldots, \alpha_{k-j}^{k-j} \leq 1,  \sum_{i=0}^{k-j} \alpha_i^{k-j} = 1 \nonumber \\
& \Delta_{k-j} \geq 0, \Delta_{k-j+1} \geq 0, \ldots, \Delta_k \geq 0
\label{eq:stagek-j-1}
\end{align}
where the first constraint states that relay $k-j+1$ must accumulate ${I}_{max}$ bits of mutual information
by the end of stage $(k-j)$.
The second set of constraints state that every subsequent node $k-j+n$ (where $2 \leq n \leq j+1$) 
including the destination in the ordering $\mathcal{O}_{opt}$
must accumulate ${I}_{max}$ bits of mutual information
by the end of stage $(k-j+n)$. In the last term of the left hand side of each such constraint, 
we have used the induction hypothesis that in every stage after stage $(k-j)$, only the node that just decoded the packet transmits. 
Using the transform $\beta_i = \Delta_{k-j}\alpha_i^{k-j}$ for all $i \in \{0, 1, 2, \ldots, k-j\}$, and 
$\sum_{i=0}^{k-j} \alpha_i^{k-j} = 1$, we have the equivalent problem:
\begin{align}
&\textrm{Minimize:} \; \sum_{i=0}^{k-j} \beta_i + \Delta_{k-j+1} + \ldots + \Delta_{k-1} + \Delta_k \nonumber \\
&\textrm{Subject to:}  \; {I}_{k-j+1}(t_{k-j}) + \sum_{i=0}^{k-j} \beta_i C_{i,k-j+1} = {I}_{max} \nonumber \\
& {I}_{k-j+n}(t_{k-j}) + \sum_{i=0}^{k-j} \beta_i C_{i,k-j+n} \nonumber\\
& + \sum_{i=1}^{n-1} \Delta_{k-j+i} C_{k-j+i, k-j+n} = {I}_{max} \; \forall n \in \{2, \ldots, j+1\} \nonumber \\
& \beta_i \geq 0 \; \forall i \in \{0, 1, 2, \ldots, k-j\}, \Delta_{k-j+1} \geq 0, \ldots, \Delta_k \geq 0 
\label{eq:stagek-j-2}
\end{align}
The problems (\ref{eq:stagek-j-1}) and (\ref{eq:stagek-j-2})
are equivalent because we can transform (\ref{eq:stagek-j-2}) to the original problem by using the relations
$\Delta_{k-j} = \sum_{i=0}^{k-j} \beta_i$ and $\alpha_i^{k-j} = \frac{\beta_i}{\Delta_{k-j}}$. The
degenerate case where $\Delta_{k-j} = 0$ does not arise because if $\Delta_{k-j} = 0$, then no node transmits in stage $(k-j)$. We 
know from the induction hypothesis that only the nodes after node $k-j$ in the ordering $\mathcal{O}_{opt}$ transmit after stage $(k-j)$.
This means that node $k-j$ never transmits, a contradiction.


The second set of constraints in problem (\ref{eq:stagek-j-2}) can be written in matrix form 
as $\mathbf{B} + \mathbf{C\Delta} = \mathbf{I}$ as shown in Table I
where  $\mathbf{C}$ is a lower triangular matrix with
all diagonal entries being non-zero.
{Note that if we are using the optimal relay set such that each node 
transmits for a non-zero duration, then none of the capacity values in the diagonal of $\mathbf{C}$ can be zero. 
This is because each such term denotes the capacity value between successive relay nodes in the optimal transmission schedule. 
If any of these is zero, it would imply that a successor relay node is able to decode the packet without requiring its predecessor node to transmit. 
Then, one can simply remove the predecessor node from the optimal relay set, leading to a contradiction.}

Thus, we have:
$\mathbf{\Delta} = \mathbf{C}^{-1} (\mathbf{I} - \mathbf{B})$. Therefore 
each of the terms $\Delta_{k-j+1}, \Delta_{k-j+2}, \ldots,  \Delta_{k-1}, \Delta_k$
is linear in the variables $\beta_0, \beta_1, \ldots, \beta_{k-j}$. 
Using this, the objective in (\ref{eq:stagek-j-2}) can be expressed as a 
linear function of these variables. Let this be denoted by $f(\beta_0, \beta_1, \ldots, \beta_{k-j})$. 
Also we know that under the optimal solution, 
$\Delta_{k-j+1} > 0, \ldots, \Delta_k > 0$. Thus, 
we can remove the last set of constraints from (\ref{eq:stagek-j-2}) without affecting the optimal solution (using Lemma \ref{lem:inactive_const}).
Thus, (\ref{eq:stagek-j-2}) becomes:
\begin{align}
\textrm{Minimize:} \qquad & f(\beta_0, \beta_1, \ldots, \beta_{k-j}) \nonumber \\
\textrm{Subject to:}  \qquad & {I}_{k-j+1}(t_{k-j}) + \sum_{i=0}^{k-j} \beta_i C_{i,k-j+1} = {I}_{max} \nonumber \\
& \beta_i \geq 0 \qquad \forall i \in \{0, 1, 2, \ldots, k-j\}
\label{eq:stagek-j-3}
\end{align}
Similar to the stage $(k-1)$ case, 
this optimization problem is linear in $\beta_i$ with a single linear equality constraint and thus
the solution is of the form where all except one $\beta_i$ are zero.
Since $\alpha_i^{k-j} = \frac{\beta_i}{\Delta_{k-j}}$, we have that in the optimal solution, exactly one of the fractions 
$\alpha_0^{k-j} , \alpha_1^{k-j}, \ldots, \alpha_{k-j}^{k-j}$
is equal to $1$ and rest must be $0$. This implies that only one node transmits in this stage. Further, 
this node must be the relay node $k-j$ that decoded the packet at the beginning of this stage. Else,
node $k-j$ never transmits. This is because by definition of stage $(k-j)$, node $k-j$ does not have the
packet before the
beginning of stage $(k-j)$ and hence cannot transmit before stage $(k-j)$.
By induction hypothesis, only nodes $k-j+1, k-j+2, \ldots, k$ transmit when stage $(k-j)$
ends. Thus, if node $k-j$ is not the node chosen for stage $(k-j)$, it never
transmits, contradicting the fact that it is part of the optimal set. 
This proves the Theorem.
\end{proof}

\subsection{Exact Solution for a Line Topology}
\label{section:line_network}

In this section, we present the optimal solution for a special case of
line topologies. Specifically, all nodes are located on a line as shown in Fig. \ref{fig:four} and
no two nodes are co-located.
We assume that each node transmits at the same PSD $P$. Further, the transmission capacity $C_{ij}$ between any two nodes $i$ and $j$ 
depends only on the distance $d_{ij}$ between the two nodes and is a monotonically decreasing function of
$d_{ij}$. For example, we may have that $C_{ij} = \log_2(1 + \frac{h_{ij}P}{N_0})$ where $P$ is the PSD and
$h_{ij} = \frac{1}{d_{ij}^\alpha}$ where $\alpha \geq 2$ is the path loss coefficient. Under these assumptions, we can determine the
optimal cooperating set for the problem of routing with mutual information accumulation as follows.

\begin{lem}
The optimal cooperating set for the line topology as described above is given by the set of all relay nodes located
between the source and the destination.
\label{lem:line_nw}
\end{lem}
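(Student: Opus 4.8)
The plan is to combine Theorem~\ref{thm:min_delay} with a two-hop ``insertion'' computation that exploits the monotonicity of $C$ in distance. By Theorem~\ref{thm:min_delay}, for any fixed cooperating set the optimal schedule is a wavepath: the nodes decode one at a time and, in each stage, only the node that just decoded transmits. Writing $C(\cdot)$ for the (strictly decreasing) capacity as a function of distance and $p_0=0<p_1<\dots<p_{k+1}=L$ for the positions of $s$, the cooperating relays, and $d$, the decode constraints $\sum_{j=0}^{m-1}\Delta_j\,C(p_m-p_j)=I_{max}$ form a lower-triangular system (exactly as in Table~I) that determines the stage durations $\Delta_j$, and hence the total delay $T=\sum_j\Delta_j$, from the ordered set alone. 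I would first argue, by a standard exchange argument, that the optimal decode order coincides with the left-to-right order of positions: if two consecutive transmitters were out of position order, swapping them raises every relevant capacity and so cannot increase $T$. It then remains to show that every relay strictly between $s$ and $d$ belongs to the cooperating set, and that no relay outside $[s,d]$ does.

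The core step is a single two-hop calculation. Consider delivering $I_{max}$ from a transmitter at $0$ to a receiver at $L$ with one candidate relay at $x\in(0,L)$. Transmitting directly costs $I_{max}/C(L)$, whereas the wavepath $0\to x\to L$ costs
\[
\Big[\tfrac{1}{C(x)}+\tfrac{C(x)-C(L)}{C(x)\,C(L-x)}\Big]I_{max}
=\frac{C(L-x)+C(x)-C(L)}{C(x)\,C(L-x)}\,I_{max}.
\]
A direct comparison shows the relay strictly helps iff $[\,C(x)-C(L)\,]\,[\,C(L)-C(L-x)\,]<0$, which always holds because $C$ is strictly decreasing and $0<x<L$ forces both $C(x)>C(L)$ and $C(L-x)>C(L)$. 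Thus inserting an interior relay into a direct hop strictly reduces the time for the far node to decode, which is the seed of the argument.

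To turn this into the lemma I would induct, showing that inserting any relay $r^{*}$ lying between two consecutive cooperating nodes strictly decreases the total delay, so no interior relay can be left out of an optimal set. The main obstacle is the global coupling introduced by mutual-information accumulation: inserting $r^{*}$ into one hop changes the information that every \emph{downstream} node has accumulated by the time the next node decodes, and this effect is not monotone --- a short computation shows a downstream node can momentarily hold \emph{less} information after the insertion even though the immediate hop finishes sooner. Hence a naive ``state-domination'' argument fails, and the induction must instead account for the net effect: the strict time saving on the modified hop, traded against the bounded extra time later needed to make up any downstream information deficit, with the two-hop inequality above guaranteeing the balance is favorable. Finally, a relay outside the segment $[s,d]$ is farther from $d$ and from every forward node than the endpoint of $[s,d]$ nearest to it, so it is dominated by that endpoint and, under the left-to-right ordering, decodes only after $d$; such nodes therefore play no forwarding role and are excluded, leaving exactly the relays between $s$ and $d$.
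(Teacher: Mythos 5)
Your setup is fine (the wavepath structure from Theorem \ref{thm:min_delay}, the left-to-right decode order, and the two-hop insertion computation are all correct), but the proof has a genuine gap exactly where you flag it: the inductive step. The two-hop inequality compares an isolated transmitter--receiver pair and ignores accumulation at downstream nodes entirely, so it cannot by itself ``guarantee the balance is favorable'' when $r^*$ is inserted into a chain in which every later node's decode time depends on what it overheard during the modified stages. You correctly observe that your intended state-domination argument fails, and then replace it with an unspecified ``net-effect accounting''; but proving that this accounting comes out favorably \emph{is} the content of the lemma, so as written the central step is asserted, not proved. A secondary error: your exclusion of relays outside $[s,d]$ claims such a node ``decodes only after $d$.'' That is true for relays to the right of $d$ (every transmitter is closer to $d$ than to them), but false for a relay just to the left of $s$, which decodes very early; those nodes must be excluded by a replacement argument (substitute $s$ for them as transmitter), not by decode order.

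The deeper issue is that the obstacle you identified is an artifact of comparing information states at corresponding \emph{stage boundaries}, which occur at different real times in the two schedules. If you instead compare the two schedules pointwise in \emph{time}, domination does hold: the schedules coincide until $r^*$ decodes; thereafter the modified schedule's transmitter position is, at every instant $t$, at least as far right as the original schedule's transmitter position (this propagates by induction, since every decode time can only move earlier), and every receiver that still matters lies to the right of both transmitters. Since capacity is decreasing in distance, every such receiver's accumulation rate---hence its accumulated information at every time $t$---is at least as large after the insertion, and strictly larger for $d$ over an interval of positive length, so $T$ strictly decreases. This pointwise-in-time coupling is essentially the mechanism in the paper's proof, which however avoids insertion altogether: it uses \emph{replacement} plus induction along the line. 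The leftmost relay (node $1$) is the first to decode; if it were excluded from the cooperating set, letting it transmit in place of $s$ once it decodes strictly increases the capacity to \emph{every} node on its right, strictly reducing all subsequent decode times---a contradiction with optimality. The same argument applied to nodes $2,\ldots,n$ forces every intermediate relay into the set, and relays left of $s$ are removed by the symmetric replacement with $s$. So your route can be repaired, but the repair (the time-domain coupling, or the paper's replacement argument) is precisely the piece that is missing.
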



\begin{proof} \emph{(Lemma \ref{lem:line_nw}):}
Consider the line network as shown in Fig. \ref{fig:four}. 
We first show that the optimal cooperating set cannot contain any relay node that lies to the left of the source.
Suppose the optimal set contains one or more such nodes. Then, we can replace 
all transmissions by these nodes with a source transmission and
get a smaller delay. This is because the source has a strictly higher transmission capacity to \emph{all} nodes to its right than
each of these nodes.

Next, we show that the optimal cooperating set must contain all the nodes that are located between $s$ and $d$.
We know that $s$ is the first node to transmit. The first relay node that decodes the packet is node $1$, since 
link $s-1$ has the smallest distance and therefore the highest transmission capacity among all links from
$s$ to nodes to the right of $s$. From Theorem \ref{thm:min_delay}, we know that once node $1$ has decoded the packet,
it should start transmitting if it is part of the optimal set. Else, it never transmits and the source continues to transmit
until another node can decode the packet. Suppose that the optimal set does not contain node $1$. Then, we can get a smaller delay
by having node $1$ transmit instead of $s$ once it has decoded the packet. This is because node $1$ has
a strictly higher transmission capacity to \emph{all} nodes to its right than $s$. Thus, we have that the optimal set
must contain node $1$.

The above argument can now be applied to each of the nodes $2, 3, \ldots, n$ as in Fig. \ref{fig:four}. 
This proves the Lemma. 
\end{proof}


\begin{figure}
\centering
\includegraphics[width=7cm, angle=0]{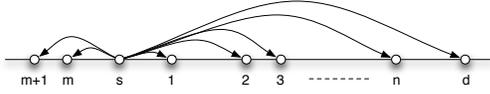}
\caption{A line topology.}
\label{fig:four}
\vspace{-0.1in}
\end{figure}

To get an idea of the reduction in delay achieved by using mutual information accumulation over traditional routing, consider
the line topology  above with $n$ nodes placed between $s$ and $d$ at equal distance such that $d_{i, i+1} = 1$ for all $i$. 
Also, suppose the transmission capacity on link $i-j$ is given by $C_{ij} = \frac{\gamma P}{d_{ij}^2}$ where $\gamma > 0$ is a constant. 
Then the capacity of link $s-1$ is ${\gamma P}$, the capacity of link $s-2$ is $\frac{\gamma P}{4}$, 
the capacity of link $s-3$ is $\frac{\gamma P}{9}$, and so on. Define $\theta \defequiv {\gamma P}$.
Then, the minimum delay for routing with mutual information accumulation is given by
$\sum_{i=0}^n \Delta_i$ where:
\begin{align*}
\Delta_0 &= \frac{I_{max}}{C_{s1}} = \frac{I_{max}}{\theta},
\Delta_1 = \frac{I_{max} - \Delta_0 C_{s2}}{C_{12}} = \frac{I_{max} - \Delta_0 \frac{\theta}{4}}{\theta} \\
& \vdots \\
\Delta_n &= \frac{I_{max} - \sum_{i=0}^{n-1}\Delta_i C_{i,n+1}}{C_{n, n+1}} = \frac{I_{max} - \sum_{i=0}^{n-1}\Delta_i \frac{\theta}{(n+1-i)^2}}{\theta}\\
\end{align*}
For simplicity, let us ignore the contribution of nodes that are more than $3$ units away from a receiver. Then, we have:
\begin{align*}
&\sum_{i=0}^n \Delta_i = \frac{(n+1) I_{max} - \frac{\theta}{4}\sum_{i=0}^{n-1} \Delta_i - \frac{\theta}{9}\sum_{i=0}^{n-2} \Delta_i}{\theta} \\
&\Rightarrow \sum_{i=0}^{n} \Delta_i = \frac{(n+1) I_{max} + \frac{\theta}{4} \Delta_n + 
\frac{\theta}{9}(\Delta_n + \Delta_{n-1})}{\theta(1 + \frac{1}{4} + \frac{1}{9})} \\
&< \frac{(n+1) I_{max} + \frac{\theta}{4} \Delta_0 + \frac{\theta}{9} 2 \Delta_0}{\theta(1 + \frac{1}{4} + \frac{1}{9})} 
= \frac{I_{max}}{\theta} \Bigg(\frac{n + 1 + \frac{1}{4} + \frac{2}{9}}{1 + \frac{1}{4} + \frac{1}{9}} \Bigg)
\end{align*}
where we used the fact that $\Delta_n, \Delta_{n-1} < \Delta_0$.
The minimum delay for traditional routing is simply $(n+1) \Delta_0 = (n+1)\frac{I_{max}}{\theta}$. Thus, for this network, the delay under
mutual information accumulation is smaller than that under traditional routing  
at least by a factor $\frac{n + 1 + \frac{1}{4} + \frac{2}{9}}{(n+1)(1 + \frac{1}{4} + \frac{1}{9})}$ that approaches $\frac{36}{49} = 73\%$ for large $n$.

\section{Minimum Energy Routing with Delay Constraint}
\label{section:related_problem}

Next, we consider the problem of minimizing the sum total energy to transmit a packet from the source to
destination using mutual information accumulation subject to a given delay constraint $D_{max}$. 
This problem is more challenging than problem (\ref{eq:min_delay}) since in addition
to optimizing over the cooperating relay set and the transmission order of nodes in that set, 
it also involves determining the PSD values to be used for
each node. Further, a cooperating relay node may need to transmit at different PSD levels during different stages of the transmission schedule.

\subsection{Problem Formulation}
\label{section:formulation2}

Consider a transmission strategy (similar to the one discussed in Section \ref{section:timeslot}) that is described by a 
cooperating relay set $\mathcal{R}$ of size $|\mathcal{R}| = k$ and
a decoding order $\mathcal{O}$. Let the terms $\Delta_j$ and $A_{ij}$ be defined in a similar fashion. Also, let
$P_{ij}$ denote the PSD at which node $i$ transmits in stage $j$. Then for any transmission strategy $\mathcal{G}$ 
that uses the subset of relay nodes $\mathcal{R}$ with an ordering $\mathcal{O}$, 
the minimum sum total energy to transmit a packet from source to destination subject to the
delay constraint $D_{max}$ is given by the solution to the following optimization problem:
\begin{align}
\textrm{Minimize:} & \sum_{j = 0}^k \sum_{i=0}^j A_{ij} P_{ij} \nonumber \\
\textrm{Subject to:}  & \sum_{j = 0}^k \Delta_j \leq D_{max} \nonumber \\
& \sum_{i=0}^{m-1} \sum_{j=0}^{m-1} A_{ij} C_{im}(P_{ij}) \geq {I}_{max} \; \forall m \in \{1, \ldots, k+1\} \nonumber \\
& \sum_{i=0}^{j} A_{ij} \leq \Delta_j \; \forall j \in \{0, 1, 2, \ldots, k\} \nonumber \\
& A_{ij}, P_{ij} \geq 0 \; \forall i \in \{0, 1, 2, \ldots, k\}, j \in \{0, 1, \ldots, k\} \nonumber \\
& A_{ij} = 0, P_{ij} = 0 \; \forall i > j \nonumber \\
& \Delta_{j} \geq 0 \; \forall j \in \{0, 1, 2, \ldots, k\}
\label{eq:min_total_energy}
\end{align}
where the first constraint represents requirement that the total delay must not exceed $D_{max}$. 
The second constraint captures the requirement that node $m$ in the ordering  must accumulate at least ${I}_{max}$ amount of
mutual information by the end of stage $m-1$ using all transmissions in all stages up to stage $m-1$.
Note that in the second constraint,
$C_{im}(P_{ij})$ denotes the transmission capacity of link $i-m$ in stage $j$ and it is a function of $P_{ij}$, 
the PSD of node $i$ in stage $j$.  Also note that (\ref{eq:min_total_energy}) is not a linear program in general, since
the $C_{im}(P_{ij})$ may be non-linear in $P_{ij}$.

\subsection{Characterizing the Optimal Solution of (\ref{eq:min_total_energy})}
\label{section:central2}

Let $\mathcal{R}_{opt}$ denote the subset of relay nodes that take part in the routing process in the optimal solution. Let 
$k = |\mathcal{R}_{opt}|$ be the size of this set. Also, let $\mathcal{O}_{opt}$ be the optimal ordering.
Note that, by definition, each node in $\mathcal{R}_{opt}$ transmits for a non-zero duration (else, we can remove it from the set without
affecting the sum total energy). Finally, let $P_{ij}^{opt}$ denote the optimal PSD used by node $i$ in stage $j$.
Then, similar to Theorem \ref{thm:min_delay}, we have the following:


\begin{thm}
Under the optimal solution to the minimum energy routing with delay constraint problem $(\ref{eq:min_total_energy})$, 
in each stage $j$, it is optimal for only one node to transmit, and that node is node $j$. 
\label{thm:optimal_sol_min_energy}
\end{thm}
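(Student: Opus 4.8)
The plan is to follow the inductive backbone of the proof of Theorem \ref{thm:min_delay} essentially verbatim, inducting on stages from the last one backward, and to isolate the two genuinely new features of (\ref{eq:min_total_energy}): the variable transmit PSDs $P_{ij}$, which make the capacities $C_{im}(P_{ij})$ (and hence the objective) nonlinear, and the global delay constraint $\sum_j \Delta_j \leq D_{max}$, which has no analogue in (\ref{eq:min_delay}). First I would dispose of the PSD nonlinearity by freezing the PSDs at their optimal values $P_{ij}^{opt}$: with these held fixed, the residual optimization over the durations $A_{ij}$ and stage lengths $\Delta_j$ has a linear objective $\sum_j \sum_i A_{ij} P_{ij}^{opt}$ and linear mutual-information constraints $\sum A_{ij} C_{im}(P_{ij}^{opt}) \geq I_{max}$, so it is an LP with exactly the combinatorial shape of (\ref{eq:min_delay}), for which the global optimum is feasible and optimal.

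Then I would replay the stage-$(k-j)$ argument of Theorem \ref{thm:min_delay} on this LP: introduce $\beta_i = A_{i,k-j}$, use the induction hypothesis (one transmitter per later stage) to write the mutual-information equalities for nodes $k-j+2,\ldots,d$ as the lower-triangular, invertible system $\mathbf{B}+\mathbf{C}\mathbf{\Delta}=\mathbf{I}$ that expresses each later $\Delta_{k-j+l}$ as an affine function of the $\beta_i$, substitute to make the energy objective affine in $\beta$, and delete the now-inactive constraints $\Delta_{k-j+l} \geq 0$ via Lemma \ref{lem:inactive_const}. What remains is the minimization of a linear function of $\beta$ subject to the single equality constraint for the next decoder $k-j+1$, whose basic optimal solution has a single nonzero $\beta_i$; the contradiction argument of Theorem \ref{thm:min_delay} (node $k-j$ can transmit neither before, nor, by the induction hypothesis, after stage $k-j$, so it must be the chosen node) then forces that node to be $k-j$.

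The step I expect to be the main obstacle is the delay constraint. In the per-stage subproblem it contributes one extra linear inequality in $\beta$, and when it is active a basic optimal solution of the fixed-PSD LP may carry two nonzero $\beta_i$, i.e.\ two transmitters, breaking the claim. The resolution must use the freedom in the PSD that the fixed-PSD reduction throws away, so I would instead dualize the delay constraint. After the change of variables $e_{ij}=A_{ij}P_{ij}$, the contribution $A_{ij}C_{im}(e_{ij}/A_{ij})$ is the perspective of the concave map $C_{im}$ and is jointly concave in $(A_{ij},e_{ij})$, so (\ref{eq:min_total_energy}) is convex and strong duality supplies a multiplier $\lambda^\star \geq 0$ for $\sum_j \Delta_j \leq D_{max}$. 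Since at optimum no stage is left idle (when $\lambda^\star>0$), $\Delta_j=\sum_i A_{ij}$, and the Lagrangian collapses to $\sum_{ij}(e_{ij}+\lambda^\star A_{ij})$ subject only to the mutual-information constraints, with the hard delay constraint removed; freezing the PSDs in this Lagrangian makes the per-stage cost $A_{ij}(P_{ij}^{opt}+\lambda^\star)$ linear, and the single-equality vertex argument above again yields one transmitter per stage.

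The delicate point I would have to handle carefully is reconciling this Lagrangian-optimal single-node schedule with the original delay-constrained problem: a one-node schedule obtained at frozen PSDs generally realizes a total delay $D' \neq D_{max}$, so when $\lambda^\star > 0$ complementary slackness can fail and its energy need not equal the optimum $E^\star$. I would close this gap by letting the PSD of the sole transmitter in each stage vary rather than freezing it, reoptimizing these PSDs so that the schedule exactly exhausts the deadline, and arguing, via the convexity of $C_{im}^{-1}$ (equivalently the concavity of $C_{im}$), that raising the PSD of the single transmitter (node $j$) to meet the deadline is no more costly than time-sharing the stage with a second node. Verifying this last energy comparison, together with the fact that an earlier relay has already had its own stage and so cannot profitably reappear, is where the substantive work lies.
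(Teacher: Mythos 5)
Your opening reduction---freeze the PSDs at $P_{ij}^{opt}$ so that (\ref{eq:min_total_energy}) becomes an LP in the remaining variables and then replay the stage-by-stage argument of Theorem \ref{thm:min_delay}---is, essentially word for word, the paper's \emph{entire} proof of this theorem; everything after that in your proposal is ground the paper never treads. And your diagnosis of where the replay breaks is exactly right: once the deadline is active, the reduced per-stage problem analogous to (\ref{eq:stagek-j-3}) carries a second tight constraint (the delay inequality, in addition to node $(k-j+1)$'s decoding equality), so its basic optimal solutions can have \emph{two} nonzero $\beta_i$'s and the ``all but one $\beta_i$ vanish'' conclusion no longer follows. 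A second ingredient also fails silently: the step ``the decoding constraints hold with equality by definition of a stage'' is a fact about the delay objective; under an energy objective it can be strictly optimal for a node to keep transmitting past the next node's decoding instant, because a single broadcast feeds several future receivers at once.

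The genuine gap is the energy comparison you defer to the end (``raising the PSD of the single transmitter to meet the deadline is no more costly than time-sharing''): it is not merely unverified, it is false in general, so the repair cannot be completed as outlined. Concretely, take $s, r_1, r_2, d$ with $C_{ij}(P)=\ln(1+h_{ij}P)$, $h_{s1}=2$, $h_{12}=h_{2d}=1$, $h_{1d}=0$, $h_{s2}=h_{sd}=h$, $I_{max}=1$, $D_{max}=1.7$, where $h\approx 0.361$ and $p_a\approx 2.667$ are chosen to solve $1+hp_a=2eh$ and $p_a+1=2e\ln(2eh)$. Then the schedule ``source transmits $0.859$ at PSD $2.667$, then $r_1$ and $r_2$ each transmit $0.420$ at PSD $e-1$'' satisfies the KKT system of the convex (perspective) reformulation exactly, with multipliers $\nu_1=0$, $\nu_2=\nu_3=e$, $\lambda=1$, hence is globally optimal with energy $\approx 3.736$; in it the source transmits well past $r_1$'s decoding time $0.542$, i.e.\ two nodes transmit in stage $1$. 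Every schedule with the structure of Fig.~\ref{fig:three} (each node stops the moment the next one decodes) costs at least $\approx 3.745$ here. So for nonlinear rate-power curves with an active deadline, the theorem read the way the paper uses it---stages delimited by actual decoding instants---is false, and the paper's ``similar arguments apply'' proof inherits exactly the flaw you uncovered rather than resolving it. What does survive is the weaker, purely formal statement that \emph{some} optimal solution of (\ref{eq:min_total_energy}) has $A_{ij}=0$ for all $i\neq j$: this follows not from vertex counting or duality but from a one-line consolidation exchange (move all of node $i$'s transmission time and energy into formal stage $i$; every accumulation inequality, the deadline, and the total energy are preserved, averaging node $i$'s powers via concavity of $C_{im}$), and in the linear rate-power regime actually needed by the greedy algorithm of Section \ref{section:greedy2}, information depends only on energy, the deadline decouples, and both your argument and the paper's go through.
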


\begin{proof} \emph{(Theorem \ref{thm:optimal_sol_min_energy}):}
Note that given any fixed set of power levels $P_{ij}$, the problem (\ref{eq:min_total_energy}) becomes linear in the other variables and
is similar to the problem (\ref{eq:min_delay}). Thus, similar arguments as in the proof of Theorem \ref{thm:min_delay} can be applied to establish this
property and we omit the details for brevity.
\end{proof}

Although Theorem \ref{thm:optimal_sol_min_energy} simplifies the optimization problem (\ref{eq:min_total_energy}), 
it does not yield a greedy transmission strategy applied over all subsets (similar to the one in Section \ref{section:central}) 
for computing the optimal solution.  
This is because the transmission order generated by the greedy strategy depends on the power levels used. 
For general non-linear rate-power functions, different power levels can give rise to different decoding orders for the same relay set
under the greedy strategy (see Appendix B for an example).
Thus, solving (\ref{eq:min_total_energy}) may involve searching over all possible orderings of all possible subsets.
However, for the special, yet important case of \emph{linear} rate-power functions, this problem can be simplified considerably. 
A linear rate-power function is a good approximation for the low SNR regime. For example, in sensor networks where bandwidth is 
plentiful and power levels are small, it is reasonable to assume that the nodes operate in the low SNR regime.
In the following, we will assume that the transmission capacity $C_{ij}(P_i)$ on link $i-j$ is given by $C_{ij}(P_{i}) = \gamma P_i h_{ij}$
(in units of bits/sec/Hz) where $\gamma$ is a constant and $P_i$ is the PSD of node $i$. Then, we have the following:

\begin{thm}
For linear rate-power functions, 
the decoding order of nodes in the optimal set $\mathcal{R}_{opt}$ under the greedy transmission strategy
is the same for all non-zero power allocations. Further, the sum total power required to
transmit a packet from the source to the destination is the same for all non-zero power allocations.
\label{thm:optimal_sol_linear}
\end{thm}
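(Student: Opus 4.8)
The plan is to combine the single-transmitter-per-stage structure established in Theorem \ref{thm:optimal_sol_min_energy} with the special algebraic form of the linear rate-power law. The key observation is that when $C_{ij}(P_i) = \gamma P_i h_{ij}$, the mutual information delivered to a receiver $m$ by a transmitter $i$ that transmits at PSD $P$ for duration $\Delta$ equals $\Delta \gamma P h_{im} = \gamma h_{im}(\Delta P)$; that is, it depends on the transmission only through the \emph{energy} $\Delta P$, not on how that energy is split between power and time. The energy actually spent is likewise exactly $\Delta P$. Under the greedy strategy applied to $\mathcal{R}_{opt}$, Theorem \ref{thm:optimal_sol_min_energy} guarantees that in each stage $j$ only node $j$ transmits (at some PSD $P_j > 0$), so every node transmits in exactly one stage and the entire schedule is parameterized by the vector $(P_0, P_1, \ldots, P_k)$.

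First I would set up an induction over stages. Let $I_m^{(j)}$ denote the mutual information accumulated at a still-undecoded node $m$ by the end of stage $j$ (time $t_{j+1}$), and claim that $I_m^{(j)}$ is independent of the power allocation. For the base case (stage $0$), the source transmits at $P_0$, so node $m$ decodes at duration $I_{max}/(\gamma P_0 h_{0m})$; the first to decode, node $1$, is the one maximizing $h_{0m}$ over $\mathcal{R}_{opt}$, which is power-independent, and the residual information it leaves at every other node is $I_m^{(0)} = \Delta_0 \gamma P_0 h_{0m} = I_{max}\, h_{0m}/h_{01}$, again independent of $P_0$. For the inductive step, in stage $j$ node $j$ transmits at $P_j$, and the next decoder $j+1$ is the one minimizing $(I_{max} - I_m^{(j-1)})/(\gamma P_j h_{jm})$; since the common factor $\gamma P_j$ cancels, this minimizer depends only on the channel gains and on the power-independent quantities $I_m^{(j-1)}$. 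The stage duration is $\Delta_j = (I_{max} - I_{j+1}^{(j-1)})/(\gamma P_j h_{j,j+1})$, and substituting gives $I_m^{(j)} = I_m^{(j-1)} + (I_{max} - I_{j+1}^{(j-1)})\, h_{jm}/h_{j,j+1}$, whose right-hand side is power-independent, closing the induction. This establishes that the decoding order is identical for all non-zero power allocations.

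For the second claim I would compute the energy per stage directly: in stage $j$ only node $j$ transmits, for duration $\Delta_j$ at PSD $P_j$, so the energy is $\Delta_j P_j = (I_{max} - I_{j+1}^{(j-1)})/(\gamma h_{j,j+1})$, in which $P_j$ has cancelled entirely. Summing over $j = 0, \ldots, k$ gives a total energy (the objective of (\ref{eq:min_total_energy})) equal to $\sum_{j} (I_{max} - I_{j+1}^{(j-1)})/(\gamma h_{j,j+1})$, which depends only on the fixed channel gains and the power-independent carry-over terms, hence is the same for every non-zero power allocation.

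The main obstacle — essentially the only nontrivial point — is correctly propagating the accumulated mutual information across stages and verifying that the carry-over terms $I_m^{(j-1)}$ are themselves power-independent; this is exactly what makes the greedy decoding order well-defined and invariant, and the induction above handles it. I would also flag the two places where the hypotheses are used: the assumption of \emph{non-zero} power guarantees each $\Delta_j$ is finite and the cancellations are legitimate, and the non-vanishing of the consecutive-link gains $h_{j,j+1}$ (established in the proof of Theorem \ref{thm:min_delay} via the non-zero diagonal of $\mathbf{C}$) guarantees the denominators are nonzero. Finally, ties in decode times, should they occur, arise simultaneously for all power allocations and therefore do not affect the conclusion.
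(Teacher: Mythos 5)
Your proposal is correct and is essentially the paper's own proof: both argue by induction over the stages of the greedy schedule, using the fact that under $C_{ij}(P_i)=\gamma P_i h_{ij}$ the transmit power cancels from the ratio determining the next decoder, so that both the decoding order and the per-stage energy $\Delta_j P_j$ are invariant. The only cosmetic difference is bookkeeping: you carry the accumulated mutual information $I_m^{(j)}$ through the induction, while the paper carries the per-stage energies $\Delta_m P_m$; these are equivalent since $I_m^{(j)}=\gamma\sum_{l\leq j}\Delta_l P_l h_{lm}$.
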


\begin{proof} \emph{(Theorem \ref{thm:optimal_sol_linear}):}
We prove by induction. Consider any non-zero power allocation used by the nodes in $\mathcal{R}_{opt}$. 
The source is the first node to transmit. Let it be indexed by $0$.
Also, suppose the source uses PSD $P_0 > 0$. 
Under the greedy transmission strategy, the
source continues to transmit until any node can decode the packet.
This node is the one that minimizes 
$\Delta_0 = \frac{I_{max}}{C_{0i}(P_0)} = \frac{I_{max}}{\gamma P_0 h_{0i}}$ over all $i \in \mathcal{R}_{opt}$, 
which is the time to decode the packet. 
Clearly, this node is the same for all $P_0 > 0$. Let it be indexed by $1$. Also, we have that:
\begin{align*}
\Delta_0 = \frac{I_{max}}{\gamma P_0 h_{01}} \Rightarrow \Delta_0 P_0 = \frac{I_{max}}{\gamma h_{01}}
\end{align*}
which shows that the total power used in stage $0$ is independent of $P_0$.
Next, let the PSD of node $1$ be $P_1$. Then, in stage $1$ under the greedy transmission strategy, 
node $1$ transmits until any node that does not have the packet yet can decode it.
This node is the one that minimizes over all $i \in \mathcal{R}_{opt} \setminus \{1\}$:
\begin{align*}
\frac{I_{max} - \Delta_0 C_{0i}(P_0)}{C_{1i}} = 
\frac{I_{max} - \Delta_0 \gamma P_0 h_{0i}}{\gamma P_1 h_{1i}} = 
\frac{I_{max}(1 - \frac{h_{0i}}{h_{01}})}{\gamma P_1 h_{1i}} 
\end{align*}
Clearly, this node is the same for all $P_1 > 0$. Let it be indexed by $2$. Also, we have that:
\begin{align*}
\Delta_1 
= \frac{I_{max}(1 - \frac{h_{02}}{h_{01}})}{\gamma P_1 h_{12}} 
\Rightarrow \Delta_1 P_1 = \frac{I_{max}(1 - \frac{h_{02}}{\gamma h_{01}})}{\gamma h_{12}}
\end{align*}
which shows that the total power used in stage $1$ is independent of $P_0$ and $P_1$.

Now suppose this holds for all stages $\{0, 1, 2, \ldots, j-1\}$ where $j-1 < k$. We show that it also holds for stage $j$.
Let the PSD of node $j$ be $P_j$.
Under the greedy strategy, node $j$ continues to transmit in stage $j$ until any node that does not have the packet yet can decode it.
This node is the one that minimizes over all $i \in \mathcal{R}_{opt} \setminus \{1, 2, \ldots, j\}$:
\begin{align*}
\frac{I_{max} - \sum_{m=0}^{j-1} \Delta_m C_{mi}(P_m)}{C_{ji}(P_j)} 
= \frac{I_{max} - \gamma \sum_{m=0}^{j-1} \Delta_m P_m h_{mi}}{\gamma P_j h_{ji}} 
\end{align*}
From the induction hypothesis, we know that each of the terms $\Delta_m P_m$ for all $m \in \{0, 1, \ldots, j-1\}$
is independent of the power levels $P_m$. Thus, we have that the node that minimizes the expression above is the same
for all $P_j > 0$. Further, the total power used in stage $j$ is given by
\begin{align*}
\Delta_j P_j = \frac{I_{max} - \gamma \sum_{m=0}^{j-1} \Delta_m P_m h_{mi}}{\gamma h_{ji}} 
\end{align*}
which is independent of $P_0, P_1, \ldots, P_j$. This proves the Theorem.
\end{proof}

\subsection{A Greedy Algorithm}
\label{section:greedy2}

Theorem \ref{thm:optimal_sol_min_energy} suggests a simple method for computing the optimal solution to (\ref{eq:min_total_energy}) 
when the rate-power function is linear.
Specifically, we start by setting all PSD levels to the same value, say some $P > 0$. From Theorem \ref{thm:optimal_sol_linear}, we know that
the sum total power required to transmit a packet from the source to the destination is the same for all non-zero power allocations.
Then, solving (\ref{eq:min_total_energy}) is equivalent to solving the minimum delay problem (\ref{eq:min_delay}) 
with given power levels, except the delay constraint. 
This can be done using the greedy strategy described in Section \ref{section:central}.
If the solution obtained satisfies the delay constraint $D_{max}$, then we are done. 
Else, suppose we get a delay $D > D_{max}$. Then, we can scale up the power level $P$ by a factor $\frac{D}{D_{max}}$ and scale down
the duration of each stage $\Delta_j$ by the same factor. This ensures that the delay constraint is met while the sum total power used
remains the same.


\section{Minimum Delay Broadcast}
\label{section:third_problem}

Next, we consider the problem of minimum delay broadcast for the network model described in Section \ref{section:model}.
In this problem, starting with the source node, the goal is to deliver the packet to all nodes in the network in minimum time
with mutual information accumulation. We assume that there are $n$ nodes in the network other than the source. 
Similar problems have been considered in \cite{hitch-hiking, maric_broadcast, scaglione} which focus on energy accumulation and
where the goal is to broadcast the packet to all nodes using minimum sum total energy.

\subsection{Timeslot and Transmission Structure}
\label{section:timeslot_bc}

For the minimum delay broadcast problem, the transmission strategy and resulting time timeslot structure under a general policy 
is similar to the one discussed for the minimum delay routing problem in Section \ref{section:timeslot}.
Specifically, let $\mathcal{O}$ be the ordering of the $n$ nodes that represents the sequence in which 
they successfully decode the packet under any strategy.
Without loss of generality, let the nodes in the ordering $\mathcal{O}$ be indexed
as $1, 2, 3, \ldots, n$. 
Also, let the source $s$ be indexed as $0$. 
Initially, only the source has the packet.
Let $t_0$ be the time when it starts its transmission and let $t_1, t_2, \ldots, t_n$ denote the times 
when nodes $1, 2, \ldots, n$ in the ordering $\mathcal{O}$ accumulate enough mutual information to decode the packet.
We say that the transmission occurs over $n$ stages,
where stage $j, j \in \{ 0, 1, 2, \ldots, n-1\}$ represents the interval $[t_j, t_{j+1}]$.
 Note that in any stage $j$, the first $j$ nodes in the ordering 
$\mathcal{O}$ and the source have the fully decoded packet. Thus, any subset of these nodes (including potentially all of them) 
may transmit during this stage. 
For each $j$, define the duration of stage $j$ as $\Delta_j = t_{j+1} - t_j$. Also, let $A_{ij}$ denote the transmission 
duration for node $i$ in stage $j$. 
As before, we have that $A_{ij} = 0$ if $i > j$, else $A_{ij} \geq 0$. 
The total time to deliver the packet to all the $n$ nodes is given by  $T_{tot} = t_{n} - t_0 = \sum_{j = 0}^{n-1} \Delta_j$.

\subsection{Problem Formulation}
\label{section:formulation_bc}

For any transmission strategy that results in the decoding order $\mathcal{O}$, the minimum delay for broadcast is 
given by the solution to the following optimization problem:
\begin{align}	
&\textrm{Minimize:} \;   T_{tot} = \sum_{j = 0}^{n-1} \Delta_j \nonumber \\
&\textrm{Subject to:} \nonumber\\
& \sum_{i=0}^{m-1} \sum_{j=0}^{m-1} A_{ij} C_{im} \geq {I}_{max} \; \forall m \in \{1, 2, \ldots, n\} \nonumber \\
& \sum_{i=0}^{j} A_{ij} \leq \Delta_j \; \forall j \in \{0, 1, 2, \ldots, n-1\} \nonumber \\
& A_{ij} \geq 0 \; \forall i \in \{0, 1, 2, \ldots, n-1\}, j \in \{0, 1, 2, \ldots, n-1\} \nonumber \\
& A_{ij} = 0 \; \forall i > j \nonumber \\
& \Delta_{j} \geq 0 \; \forall j \in \{0, 1, 2, \ldots, n-1\}
\label{eq:min_delay_broadcast}
\end{align}
This is similar to (\ref{eq:min_delay}) except that the set $\mathcal{R}$ contains all $n$ nodes and that $d$ is not necessarily the last node to
decode the packet. As in (\ref{eq:min_delay}),
the first constraint captures the requirement that node $m$ in the decoding order $\mathcal{O}$ must accumulate at least ${I}_{max}$ amount of mutual information
by the end of stage $m-1$ using transmissions in all stages up to stage $m-1$. 
The second constraint means that in every stage $j$, the total transmission time for all nodes that have the
fully decoded packet in that stage cannot exceed the length of that stage.
Similar to (\ref{eq:min_delay}), the above problem is a linear program and thus can be solved efficiently for a given ordering $\mathcal{O}$. 
This is the approach taken in \cite{maric_broadcast} (with energy accumulation instead of mutual information accumulation, 
and with the objective of minimizing total energy for broadcast instead of delay) that proposes solving such a linear program for
\emph{every possible ordering} of the $n$ nodes, resulting in $n!$ linear programs.
In the next section, we show that the above computation can be simplified 
by making use of a structural property of the optimal solution that is similar to the results of Theorems 
\ref{thm:min_delay} and \ref{thm:optimal_sol_min_energy}. This results in a greedy algorithm that does not require
solving such linear programs to compute the optimal solution.


\subsection{Characterizing the Optimal Solution of (\ref{eq:min_delay_broadcast})}
\label{section:central_bc}

Let $\mathcal{O}_{opt}$ be the decoding order under the optimal solution. Suppose the 
the nodes in the ordering are labeled as $\{0, 1, 2, \ldots, n-1, n\}$ with $0$ being the source node.
Then, similar to Theorems \ref{thm:min_delay} and \ref{thm:optimal_sol_min_energy}, we have the following:


\begin{thm}
Under the optimal solution to the minimum delay broadcast problem $(\ref{eq:min_delay_broadcast})$,
in each stage $j$, it is optimal for at most one node to transmit. 
\label{thm:min_delay_broadcast}
\end{thm}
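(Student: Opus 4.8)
The plan is to mirror the backward-induction argument of Theorem~\ref{thm:min_delay}, replacing the unicast destination by the last node $n$ in the decoding order, but deliberately weakening the conclusion to ``at most one transmitter per stage''. The essential conceptual point is that in the broadcast setting a node need only \emph{receive} the packet, not necessarily \emph{transmit} it, so the argument that pinned the transmitter down to be node $j$ in the unicast case is no longer available.

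First I would dispose of the base case, the last stage $n-1$. At time $t_{n-1}$ every node except node $n$ has decoded, and only node $n$ still needs to accumulate $I_{max}-I_n(t_{n-1})$ bits. Exactly as in the last-stage argument of Theorem~\ref{thm:min_delay}, $\Delta_{n-1}$ is minimized by letting the single already-decoded node with the largest capacity $C_{i,n}$ transmit, so at most one node transmits in stage $n-1$. For the inductive step I would fix the optimal decoding order $\mathcal{O}_{opt}$ and assume that in every stage $\ell>j$ at most one node, call it $\sigma(\ell)\in\{0,1,\dots,\ell\}$, transmits. Introducing the transmission fractions $\alpha_i^j$ in stage $j$ and the substitution $\beta_i=\Delta_j\alpha_i^j$, the decoding constraints for the remaining nodes $m=j+1,\dots,n$ become, after inserting the induction hypothesis, a single equality in the $\beta_i$ (for node $j+1$) together with a system whose coefficient matrix in the unknowns $\Delta_{j+1},\dots,\Delta_{n-1}$ is lower triangular with diagonal entries $C_{\sigma(\ell),\ell+1}$.

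The crucial observation is that each such diagonal entry is nonzero: since $\sigma(\ell)$ is the \emph{only} transmitter in stage $\ell$ and node $\ell+1$ reaches $I_{max}$ exactly at $t_{\ell+1}$, a value $C_{\sigma(\ell),\ell+1}=0$ would mean node $\ell+1$ accumulates nothing during stage $\ell$ and must therefore have decoded earlier, a contradiction. Hence the matrix is invertible, each $\Delta_\ell$ with $\ell>j$ is an affine function of $\beta_0,\dots,\beta_j$, and the objective collapses to a linear function $f(\beta_0,\dots,\beta_j)$. Assuming the positive-duration case so that $\Delta_\ell>0$ for all $\ell>j$ (a zero-duration stage carries no transmission and trivially satisfies the claim), I would invoke Lemma~\ref{lem:inactive_const} to discard the inactive constraints $\Delta_\ell\geq 0$. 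The problem then reduces to minimizing the linear $f$ subject to the single equality $I_{j+1}(t_j)+\sum_{i=0}^{j}\beta_i C_{i,j+1}=I_{max}$ and $\beta_i\geq 0$; a basic optimal solution of such an LP has at most one nonzero $\beta_i$, so at most one node transmits in stage $j$, completing the induction.

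I expect the main obstacle to be twofold. The first is checking that the lower-triangular structure and the nonzero-diagonal property survive even though, unlike the unicast case, the identity of the single transmitter $\sigma(\ell)$ in each later stage is \emph{unknown}: the argument must hold uniformly over every admissible $\sigma(\ell)\in\{0,\dots,\ell\}$, and the triangularity must be read off purely from the fact that $\Delta_\ell$ can influence only the decoding constraints of nodes indexed $m\geq \ell+1$. The second, and the genuinely essential difference from Theorem~\ref{thm:min_delay}, is recognizing why the stronger conclusion fails. In the unicast proof one argues that if node $j$ did not transmit in stage $j$ it would never transmit, contradicting its membership in $\mathcal{R}_{opt}$; in broadcast every node is only required to decode, so a node that decodes and then stays silent is perfectly admissible and no contradiction arises. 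This is precisely why the theorem asserts only ``at most one node'' and does not identify it as node $j$, consistent with the footnote in the proof of Theorem~\ref{thm:min_delay}.
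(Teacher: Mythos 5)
Your proposal is correct and follows essentially the same route as the paper's own proof: backward induction mirroring Theorem~\ref{thm:min_delay}, the substitution $\beta_i=\Delta_j\alpha_i^j$ reducing stage $j$ to a single-equality LP whose basic optimal solutions have at most one nonzero entry (after invoking Lemma~\ref{lem:inactive_const}), explicit handling of the zero-duration degenerate case that arises only in broadcast, and the recognition that a node's membership in the broadcast set no longer forces it to transmit, which is exactly why the conclusion weakens to ``at most one node.'' The only notable addition is your broadcast-specific justification that the diagonal entries $C_{\sigma(\ell),\ell+1}$ of the triangular system are nonzero (the paper's sketch leaves this implicit), and it is a correct adaptation since the unicast argument of removing a redundant relay is unavailable here.
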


While Theorem \ref{thm:min_delay_broadcast} states that under the optimal solution, 
at most one node transmits in each stage $j$, unlike Theorems \ref{thm:min_delay} and \ref{thm:optimal_sol_min_energy}, 
it does not say that this node must be node $j$. 
In fact, this node could be any one of the nodes that have the full packet. 
Specifically, let $r_{j}$ be the node that transmits in stage $j$. Then, using
Theorem \ref{thm:min_delay_broadcast}, we have that $r_j \in \{0, 1, 2, \ldots, j\}$.
The optimal timeslot structure for the minimum delay broadcast problem is shown in
Fig. \ref{fig:broadcast}. Note that unlike the optimal 
timeslot structure for the minimum delay routing problem
(Fig. \ref{fig:three}), here it is possible for
a node to transmit more than once over the course of the broadcast.
This property does not reduce the complexity of finding the optimal
solution from $O(n!)$ linear programs to $O(2^n)$ runs of a greedy strategy. However, as we show in Section \ref{section:greedy2_bc}, 
it still leads to a {greedy} algorithm for finding the
optimal solution that does not require solving $n!$ linear programs like in \cite{maric_broadcast}.

\begin{figure}
\centering
\includegraphics[height=3cm, width=8cm, angle=0]{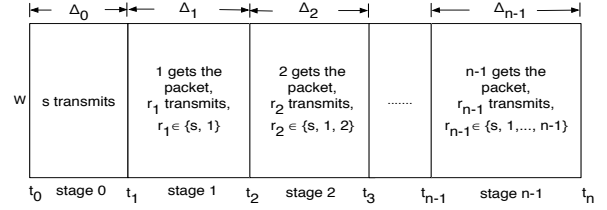}
\caption{Optimal timeslot and transmission structure for minimum delay broadcast. 
In each stage, at most one node from the set of nodes that have the full packet transmits.}
\label{fig:broadcast}
\vspace{-0.2in}
\end{figure}

\begin{proof} \emph{(Theorem \ref{thm:min_delay_broadcast}):}
The proof is similar to the proof of Theorem \ref{thm:min_delay} and therefore, we only provide a sketch here, highlighting
the main differences.

Note that Theorem \ref{thm:min_delay_broadcast} trivially holds in stage $0$ (since only the source has the full packet in this stage).
Next, similar to Theorem \ref{thm:min_delay}, in the last stage (i.e., stage $(n-1)$), only the node with the \emph{best} link (in terms of transmission
capacity) to node $n$ in the ordering $\mathcal{O}_{opt}$ should transmit in order to minimize the total delay. 
Let this node be labeled $r_{n-1}$. However, unlike Theorem \ref{thm:min_delay}, 
we cannot claim that this node must be node $n-1$ in the ordering $\mathcal{O}_{opt}$. This is because
while $r_{n-1}$ has the best link to $n$, it does not necessarily have the best links to all those nodes in the
decoding order $\mathcal{O}_{opt}$ that come after $r_{n-1}$.
Thus $r_{n-1}$ could be any one of $\{0, 1, 2, \ldots, n-1\}$.
This shows that under the optimal solution, in the last stage $(n-1)$, only one node $r_{n-1}$ transmits.
Using induction, we can show that in every prior stage $(n-j)$ where $1 < j < n$, at most one node needs to transmit.

Consider the $(n-2)^{th}$ stage. At time $t_{n-2}$, all nodes except $n-1$ and $n$ have decoded the packet. Let the mutual information
state at nodes $n-1$ and $n$ at time $t_{n-2}$ be ${I}_{n-1}(t_{n-2})$ and ${I}_n(t_{n-2})$ respectively. 
Also, suppose in the $(n-2)^{th}$ stage,
relay nodes $1, 2, \ldots, n-2$ and the source transmit a fraction $\alpha_1^{n-2}, \alpha_2^{n-2}, \ldots, \alpha_{n-2}^{n-2}$ and 
$\alpha_0^{n-2} $ of the total duration of stage $(n-2)$, i.e., $\Delta_{n-2}$, respectively. 
Note that these
fractions must add to $1$ since it is suboptimal to have any idle time (where no one is transmitting).
Then, the optimal solution must solve the following optimization problem:
\begin{align}
&\textrm{Minimize:} \;  \Delta_{n-2} + \Delta_{n-1} \nonumber \\
&\textrm{Subject to:}  \; \nonumber\\
& {I}_{n-1}(t_{n-2}) + \Delta_{n-2} \sum_{i=0}^{n-2} \alpha_i^{n-2} C_{i, n-1} \geq {I}_{max} \nonumber \\
& {I}_n(t_{n-2}) + \Delta_{n-2} \sum_{i=0}^{n-2} \alpha_i^{n-2} C_{in} + \Delta_{n-1} C_{r_{n-1},n} \geq {I}_{max} \nonumber \\
& 0 \leq \alpha_0^{n-2} , \alpha_1^{n-2}, \ldots, \alpha_{n-2}^{n-2} \leq 1 \nonumber \\
& \sum_{i=0}^{n-2} \alpha_i^{n-2} = 1 \nonumber \\
& \Delta_{n-2} \geq 0,  \Delta_{n-1} \geq 0 
\label{eq:stagek-1-1_bc}
\end{align}
Here, the first constraint states that node $n-1$ must accumulate at least ${I}_{max}$ bits of mutual information
by the end of stage $(n-2)$. 
The second constraint states that node $n$ must accumulate at least ${I}_{max}$ bits of mutual information
by the end of stage $(n-1)$.
Note that in the last term of the left hand side of the second constraint, we have used the fact that only node $r_{n-1}$ transmits during stage $(n-1)$.

It is easy to see that under the optimal solution, the first and second constraints must be met with equality.
This simply follows from the definition of the beginning of any stage $j$ as the time when node $j$ has just decoded the packet. 
Next, let $\beta_i = \Delta_{n-2}\alpha_i^{n-2}$ for all $i \in \{0, 1, 2, \ldots, n-2\}$. 
Since $\sum_{i=0}^{n-2} \alpha_i^{n-2} = 1$, we have that $\sum_{i=0}^{n-2} \beta_i = \Delta_{n-2}$ and (\ref{eq:stagek-1-1_bc}) is equivalent to:
\begin{align}
\textrm{Minimize:} \; & \sum_{i=0}^{n-2}\beta_i + \Delta_{n-1} \nonumber \\
\textrm{Subject to:}  \; & {I}_{n-1}(t_{n-2}) +  \sum_{i=0}^{n-2} \beta_i C_{i, n-1} = {I}_{max} \nonumber \\
& {I}_n(t_{n-2}) + \sum_{i=0}^{n-2} \beta_i C_{in}  + \Delta_{n-1} C_{r_{n-1}, n} = {I}_{max} \nonumber \\
& \Delta_{n-1} \geq 0, \beta_i \geq 0 \qquad \forall i \in \{0, 1, 2, \ldots, n-2\}
\label{eq:stagek-1-3_bc}
\end{align}

Note that problems (\ref{eq:stagek-1-1_bc}) and (\ref{eq:stagek-1-3_bc})
are equivalent because we can transform (\ref{eq:stagek-1-3_bc}) to the original problem by using the relations
$\Delta_{n-2} = \sum_{i=0}^{n-2} \beta_i$ and $\alpha_i^{n-2} = \frac{\beta_i}{\Delta_{n-2}}$. 
In the degenerate case where $\Delta_{n-2} = 0$, we have that no node transmits in stage $(n-2)$, so that
Theorem \ref{thm:min_delay_broadcast} holds. Note that this is different from the scenario in the minimum delay routing
problem where, by definition, the length of a stage in the optimal solution is strictly positive. Here, it is possible for
a stage to have zero length. This happens when two or mode nodes can
accumulate enough mutual information and decode the packet simultaneously.

Using similar arguments as in Theorem \ref{thm:min_delay}, it can be shown that
when $\Delta_{n-2} > 0$, then in the optimal solution exactly one of the fractions 
$\alpha_0^{n-2} , \alpha_1^{n-2}, \ldots, \alpha_{n-2}^{n-2}$
is equal to $1$ and rest must be $0$. This implies that only one node transmits in this stage. 
Combining with the case where $\Delta_{n-2} = 0$, we have that at most 
one node transmits in stage $(n-2)$. We label this node as $r_{n-2}$. Note that
$r_{n-2}$ could be any one of $\{0, 1, 2, \ldots, n-2\}$.

Using induction, it can be shown that in every stage $(n-j), 2 < j < n$, 
at most one node labeled $r_{n-j}$ transmits. Further, 
$r_{n-j}$ could be any one of $\{0, 1, 2, \ldots, n-j\}$.
This proves the Theorem.
\end{proof}

\subsection{A Greedy Algorithm}
\label{section:greedy2_bc}

Theorem \ref{thm:min_delay_broadcast} can be used to construct the following simple algorithm for computing the optimal solution to 
(\ref{eq:min_delay_broadcast}). The algorithm runs in $n$ stages. In each stage $j$, $0 \leq j \leq n-1$, 
the algorithm performs $(j+1)!$ \emph{separate} runs.
Each run corresponds to selecting one transmitter from the set of nodes with the full packet at the start of that run
and having that node transmit until
a new node decodes the packet. Thus, the number of nodes with the full packet increases by one at the end of each run.

Let $\mathcal{S}_{ij}$ denote the set of nodes that have the full packet at the end of the $i^{th}$ run of stage $j$.
We have that the size of $\mathcal{S}_{ij}$ is equal to  $\| \mathcal{S}_{ij} \| = j+2$ for all $i$. 
Further, there are $(j+1)!$ distinct such sets at the end of stage $j$ so that
the algorithm performs $(j+2) \times (j+1)! = (j+2)!$ runs in the next stage $(j+1)$.
  
To see this, note that
we start at stage $0$ with only the source having the full packet and perform only one run. 
At the end of this stage, suppose node $1$ has the packet. Then we have $\mathcal{S}_{10} = \{s, 1\}$.
In next stage (i.e., stage $1$), we perform $2! = 2$ separate runs as follows. 
In the first run, $s$ is chosen as the
transmitter for stage $1$ and continues to transmit until another node (say $x$) gets the packet. This yields
$\mathcal{S}_{11} = \{s, 1, x\}$. 
In the second run, $1$ is chosen as the
transmitter for stage $1$ and continues to transmit until another node (say $y$) gets the packet. This yields
$\mathcal{S}_{21} = \{s, 1, y\}$. Thus, at the end of stage $1$, we have $2!=2$ sets, $\mathcal{S}_{11}$ and $\mathcal{S}_{21}$, 
of size $1+2=3$ each. This procedure is repeated in stage $2$ resulting in $3$ runs starting with $\mathcal{S}_{11}$ and
$3$ runs starting with $\mathcal{S}_{21}$. Thus, in stage $2$, the algorithm performs $(1+2)! = 6$ runs and yields
$3!=6$ sets, $\mathcal{S}_{12}, \mathcal{S}_{22}, \ldots, \mathcal{S}_{62}$, each of size $2+2=4$,
at the end of stage $2$. In the same way, it can be shown that in stage $j$, the algorithm starts with
$j!$ sets of size $j+1$ each, performs $(j+1)!$ runs and results in $(j+1)!$ sets, each of size $j+2$.

The algorithm terminates after stage $(n-1)$ where it performs $n!$ runs and when all nodes decode the packet. The optimal solution is obtained by
picking the sequence of transmitting nodes that yields the minimum delay.

It can be seen that the complexity of this algorithm is $O(n!)$
Essentially, this algorithm performs an exhaustive search over all possible feasible decoding orderings. This corresponds to
searching over all possible values of $r_j \in \{s, 1, 2, \ldots, j\}$ in every stage $j$ (See Fig. \ref{fig:broadcast}).
However, unlike \cite{maric_broadcast}, it does not require solving any linear programs.

\section{Distributed Heuristics and Simulations}
\label{section:distributed}

The greedy algorithm presented in Section \ref{section:central} 
to compute the
optimal solution to problem ($\ref{eq:min_delay}$) has
an exponential computational complexity and is centralized.
In this section, we present two simple heuristics that can be implemented in polynomial time and in a distributed fashion.
We compare the performance of these heuristics with the optimal solution on general network topologies.
We also show the performance of the traditional minimum delay route that does not use mutual information accumulation.


\emph{Heuristic 1}: Here, first the traditional minimum delay route is computed using, say, Dijkstra's shortest path algorithm on the
weighted graph (where the weight $w_{ij}$ of link $i-j$ is defined as the time required to deliver a packet 
from $i$ to $j$, i.e., $w_{ij} = \frac{I_{max}}{C_{ij}}$).
Let $\mathcal{M}$ denote the set of relay nodes that form this minimum delay shortest path.  Then
the greedy algorithm as described in Section \ref{section:central} is applied on the set of nodes in $\mathcal{M}$.
Note that we are not searching over all subsets of $\mathcal{M}$.\footnote{It may be possible to get further gains by 
searching over all subsets of $\mathcal{M}$, but the worst case complexity of doing so would again be exponential. 
Our goal here is to develop polynomial time algorithms.} Thus, the complexity of this heuristic is same as that of any shortest path algorithm, 
i.e., $O(|\mathcal{M}|^2)$. 

\emph{Heuristic 2}: Here, we start with $\mathcal{M}$ as the initial cooperative set. Then, while applying the greedy
algorithm of Section \ref{section:central}, if other nodes that are not in $\mathcal{M}$
happen to decode the packet before the next node
(where the next node is defined as that node in $\mathcal{M}$ that would decode the packet if the current
transmitter continued its transmission), then these nodes are added to the cooperative set if
they have a better channel to the next node than the current transmitter. 
The intuition behind this heuristic is that while $\mathcal{M}$ is expected to be a good cooperative set,
this allows the algorithm to explore more nodes and potentially improve over Heuristic $1$. 




In our simulations, we consider a network of a source, destination, and $n$ relay nodes located in a $10 \times 10$ area.
The location of source $(1.0, 2.0)$ and destination $(8.0, 8.0)$ is fixed while the locations of the other nodes are chosen uniformly at
random. The link gain between any two nodes $i$ and $j$ is chosen from a Rayleigh distribution with mean $1$.
For simplicity, all nodes have the same PSD. The total bandwidth $W$ and packet size $I_{max}$ are both normalized to one unit.
The transmission capacity of link
$i-j$ is assumed to be $C_{ij} = \log_2\Big(1 + \frac{h_{ij}}{d_{ij}^\alpha}\Big)$ where $d_{ij}$ is the distance between nodes $i$ and $j$
and $\alpha$ is the path loss exponent. We choose $\alpha=3$ for all simulations.   

\begin{figure}
\centering
\includegraphics[height=5cm, width=8cm, angle=0]{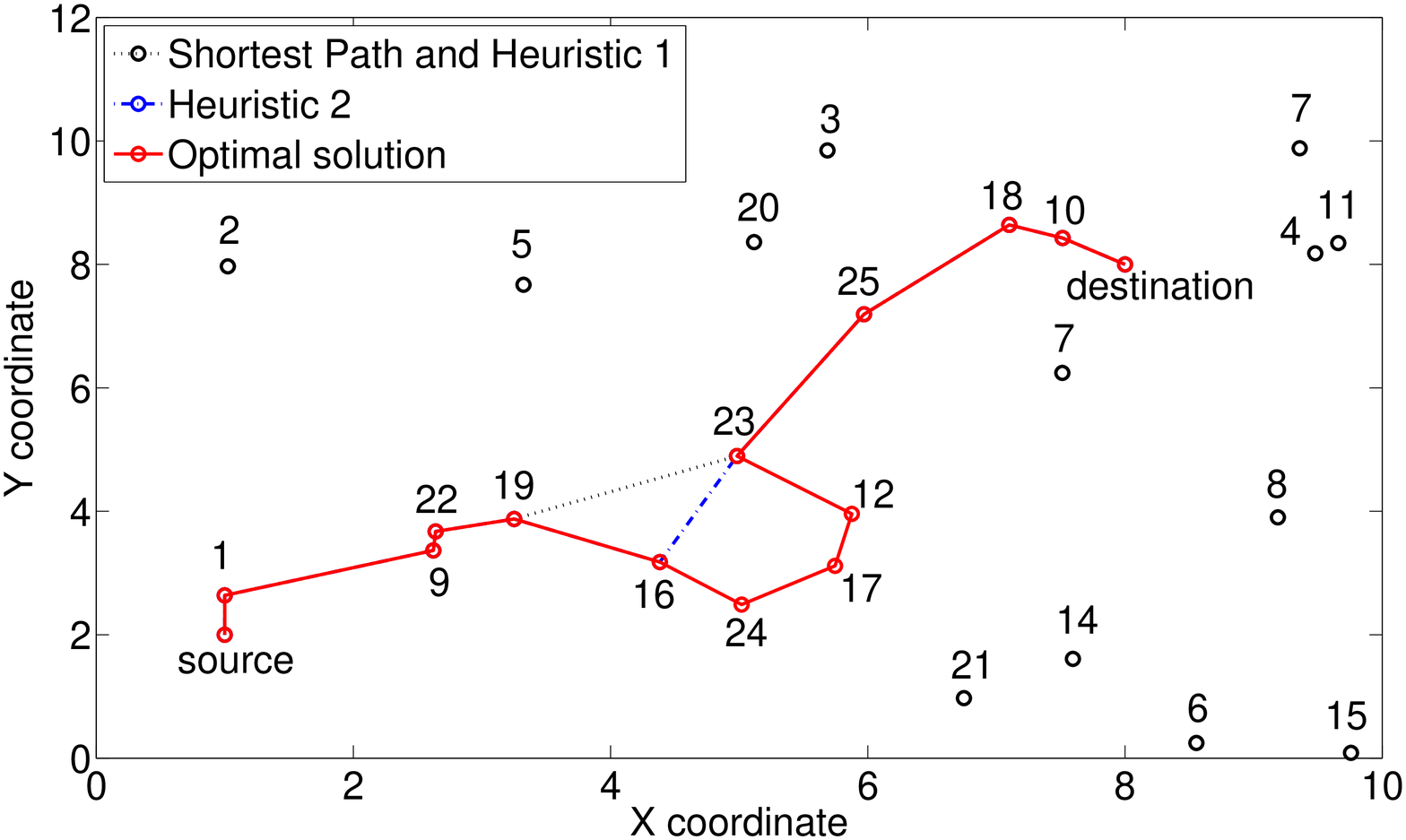}
\caption{A $25$ node network where the routes for traditional minimum delay, Heuristics $1$ and $2$, and 
optimal mutual information accumulation are shown.}
\label{fig:five}
\vspace{-0.1in}
\end{figure}

In the first simulation, $n=25$ and the network topology is fixed as shown in Fig. \ref{fig:five}. We then compute the
traditional minimum delay route and the optimal solution for routing with mutual information accumulation using the
greedy algorithm of Section \ref{section:central}. 
We also implement Heuristics $1$ and $2$ on this network. Fig. \ref{fig:five} shows the results. It is seen that
the traditional minimum delay route is given by $[s, 1, 9, 22, 19, 23, 25, 18, 10, d]$ while the
optimal mutual information accumulation route (according to the decoding order) is given by 
$[s, 1, 9, 22, 19, 16, 24, 17, 12, 23, 25, 18, 10, d]$. The decoding order of nodes under Heuristic $1$ is same as that under
the traditional minimum delay route while that under Heuristic $2$ is given by $[s, 1, 9, 22, 19, 16, 23, 25, 18, 10, d]$.
The total delay under traditional minimum delay routing, Heuristic $1$, Heuristic $2$, and optimal mutual information accumulation routing
was found to be $29.84, 23.73, 22.99$ and $22.19$ seconds respectively. 

This example demonstrates that the optimal route under mutual information accumulation can be quite different from the traditional
minimum delay path. It is also interesting to note that the set of nodes in $\mathcal{M}$ is a subset of the cooperative relay set in this
example. However, this does not hold in general. We also note that the delay under both Heuristics $1$ and $2$ is close to the optimal value.
Finally, while Heuristic $1$ only uses the nodes in $\mathcal{M}$, Heuristic $2$ explores more and ends up using node $16$ as well.


In the second simulation, we choose $n=20$. The source and destination locations are fixed as before but the locations of
the relay nodes are varied randomly over $100$ instances. For each topology instance, we compute the minimum delay obtained by 
these $4$ algorithms. In Fig. \ref{fig:six}, we plot the cumulative distribution function (CDF) of the ratio of the minimum delay 
under the two heuristics and the traditional shortest path to the minimum delay under the optimal mutual information accumulation solution. 
From this, it can be seen that both Heuristic $1$ and $2$ perform quite well over general network topologies. 
In fact, they are able to achieve the optimal performance $40\%$ and
$60\%$ of the time respectively. Further, they are within $10\%$ of the optimal at least $90\%$ of the time and
within $15\%$ of the optimal at least $98\%$ of the time.
Also, Heuristic $2$ is seen to  outperform Heuristic $1$ in general. 
Finally, the average delay in routing with mutual information accumulation 
was found to be $77\%$ of the average delay of traditional shortest path routing (where the average is taken over
the $100$ random topologies). 
It is interesting to note that once the optimal relay set is computed, the only overhead of a mutual information accumulation based scheme over 
traditional shortest path is in the decoding and re-encoding operations. In particular, such schemes only require a 1-bit feedback from a
receiver when it has successfully decoded the packet and do not require sophisticated synchronization, coordination, or extensive channel state information feedback.

\begin{figure}
\centering
\includegraphics[height=4cm, width=8cm, angle=0]{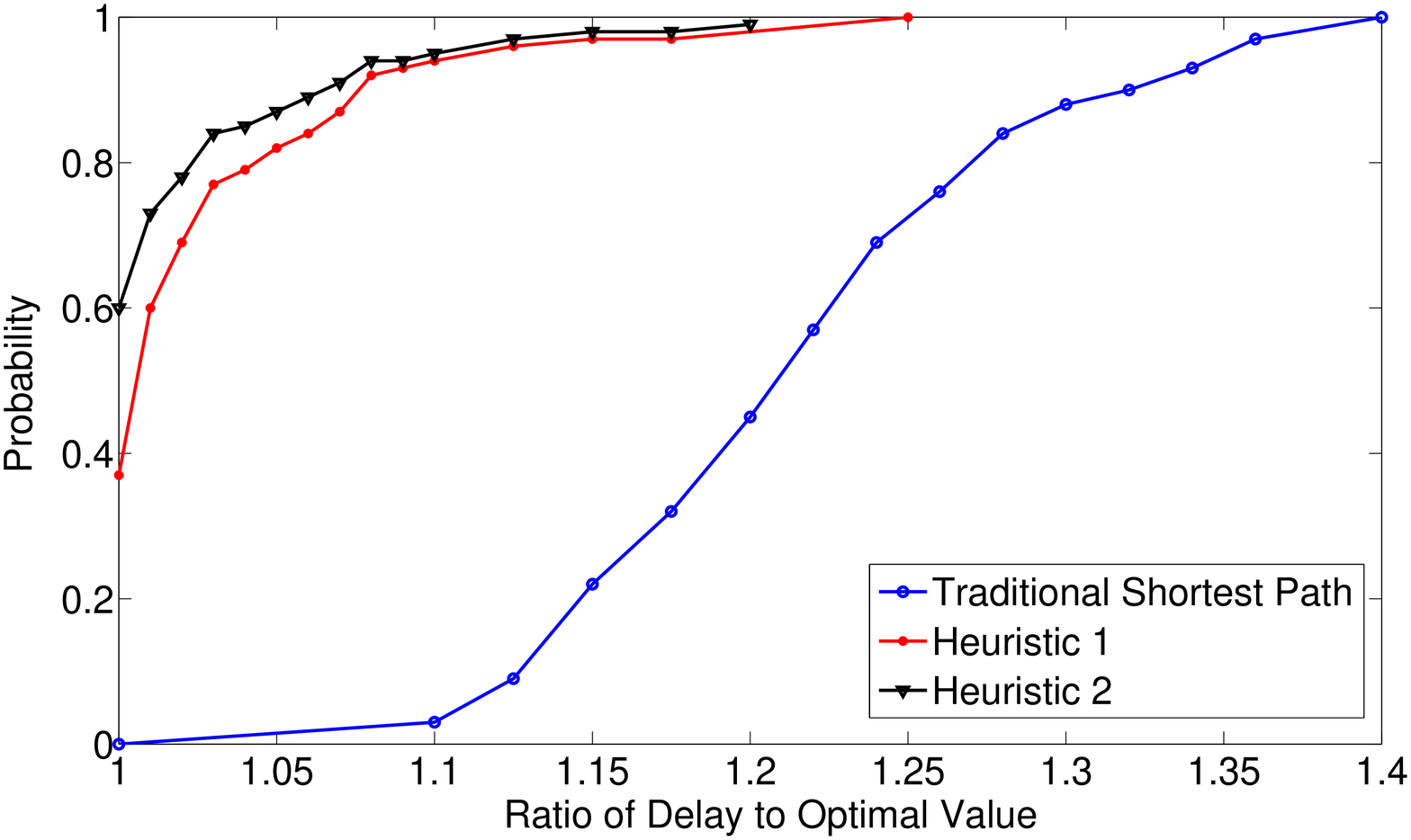}
\caption{The CDF of the ratio of the minimum delay under the two heuristics and the traditional shortest path to the 
minimum delay under the optimal mutual information accumulation solution.}
\label{fig:six}
\vspace{-0.1in}
\end{figure}

\section{Conclusions}

In this work, we considered three problems involving optimal routing and scheduling
over a multi-hop wireless network using mutual information accumulation. We formulated
the general problems as combinatorial optimization problems and then made use of several
structural properties to simplify their solutions and derive optimal greedy algorithms.
A key feature of these algorithms is that unlike prior works on these problems, they
do not require solving any linear programs to compute the optimal solution.
While these greedy algorithms still have exponential complexity,
they are significantly simpler than prior schemes and 
allow us to compute the optimal solution as a benchmark.
We also proposed two simple and practical heuristics that exhibit
very good performance when compared to the optimal solution.

\section*{Appendix A \\ Proof of Lemma \ref{lem:inactive_const}}


We argue by contradiction. 
Suppose an optimal solution to (\ref{eq:inactive}) without the
constraint $x \geq 0$ is given by $x' \neq x^*$. Then, we have that $c^T x' < c^T x^*$. Further, $x'$ satisfies 
all the constraints we did not remove, but must violate at least one of the constraints that we removed. Thus, we have that
$Ax' = b$ and $x' \not > 0$. Now let $x''$ be a convex combination of $x^*$ and $x'$, i.e., $x'' = \theta x^* + (1-\theta)x'$ where $0 < \theta < 1$. 
We have that $c^T x'' = \theta c^T x^* + (1 - \theta) c^T x'$. 
Since $c^T x' < \theta c^T x^* + (1-\theta) c^T x' < c^T x^*$,
we have that $c^T x' < c^T x'' < c^T x^*$.

Since $x^*$ satisfies the strict inequality constraint $x > 0$ in all entries,
there must be a ball about $x^*$ that still satisfies the constraint $x \geq 0$.
Further, the line segment joining $x^*$ and $x'$ 
intersects this ball. Let us choose $\theta$ such that $x''$ is this point of
intersection.  Then $x''$ still satisfies the constraint $x'' \geq 0$.
However, $c^T x'' < c^T x^*$, which contradicts the fact 
that $x^*$ solves (\ref{eq:inactive}) optimally. 


\section*{Appendix B \\ A Simple Example}

\begin{figure}
\centering
\includegraphics[height=3.5cm, angle=0]{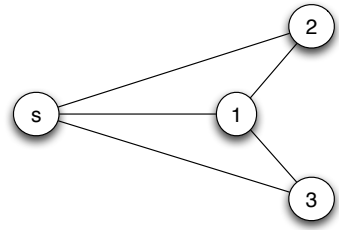}
\caption{The $4$ node example network used in Appendix B.} 
\label{fig:appen}
\end{figure}

Here, we show an example where different power levels can give
rise to different decoding orders for the same relay set under
the greedy transmission strategy when the rate-power curve is non-linear. 
Consider the $4$ node network in Fig. \ref{fig:appen}. We assume the rate-power curves on all links
except link $s-3$ are linear. Specifically, $C_{ij}(P_i) = h_{ij}P_i$ for all $ij \neq s3$. 
However, the rate-power curve on link $s-3$ is logarithmic
and is given by $C_{s3}(P_s) = \log_2(1 + h_{s3}P_s)$. 

Next, suppose $h_{s1} > h_{s2}, h_{s3}$ and $h_{12} = h_{13}$. Also, let
$I_{max} = 1$. Then, node $1$ is the first node to decode the packet for all $P_s > 0$. 
Also, we have $\Delta_0 = \frac{1}{C_{s1}(P_s)} = \frac{1}{{h_{s1}P_s}}$. 

The mutual information state at nodes 
$2$ and $3$ at the end of stage $0$ is given by $I_2(t_1) = \Delta_0 C_{s2}(P_s) = \Delta_0 {h_{s2}P_s}$ and
$I_3(t_1) = \Delta_0 C_{s3}(P_s) = \Delta_0 \log_2(1 + h_{s3}P_s)$ respectively. 
Under the greedy transmission strategy,
after stage $0$, node $1$ will continue to transmit until any of nodes $2$ or $3$ decodes the packet.
Suppose node $1$ uses transmit power $P_1 > 0$. Then,
the time for node $2$ to decode if node $1$ continues to transmit is given by:
\begin{align*}
\delta_2 = \frac{I_{max} - I_2(t_1)}{C_{12}(P_1)} = \frac{1 - \Delta_0 {h_{s2}P_s}}{{h_{12}P_1}} 
= \frac{1 - {\frac{h_{s2}}{h_{s1}}}}{{h_{12}P_1}} 
\end{align*}
Similarly, the time for node $3$ to decode if node $1$ continues to transmit is given by:
\begin{align*}
\delta_3 &= \frac{I_{max} - I_3(t_1)}{C_{13}(P_1)} = \frac{1 - \Delta_0 \log_2(1 + h_{s3}P_s)}{{h_{13}P_1}} \\
&= \frac{1 - \frac{\log_2(1 + h_{s3}P_s)}{h_{s1}P_s}}{{h_{13}P_1}}
\end{align*}
Since $h_{12} = h_{13}$, from the above we have that
$\delta_2 > \delta_3$ if $h_{s2}P_s < \log_2(1 + h_{s3}P_s)$ and 
$\delta_2 < \delta_3$ if $h_{s2}P_s > \log_2(1 + h_{s3}P_s)$. 
Let $h_{s2} = 0.05, h_{s3} = 0.1$. Then, for $P_s = 1$, we get
$\delta_2 > \delta_3$ since $0.05 < \log_2(1.1)$. However, for $P_s = 100$, we have
that $\delta_2 < \delta_3$ since $5 > \log_2(11)$.  This shows that
different power levels can give
rise to different decoding orders for the same relay set under
the greedy transmission strategy when the rate-power curve is non-linear.


\begin{thebibliography}{27}



\bibitem{NOW_survey}
G. Kramer, I. Maric, and R. D. Yates. Cooperative communications.
\emph{Found. Trends Netw.}, vol. 1, pp. 271-425, 2006.

\bibitem{laneman1}
J. N. Laneman, D. N. C. Tse, and G. W. Wornell. Cooperative diversity in
wireless networks: Efficient protocols and outage behavior. 
\emph{IEEE Trans. Inform. Theory}, vol. 50, no. 12, pp. 3062-3080, Dec. 2004.



\bibitem{mudumbai} R. Mudumbai, G. Barriac, and U. Madhow.
On the feasibility of distributed beamforming in wireless networks.
\emph{IEEE Trans. Wireless Comm.}, vol. 6, no. 5, pp. 1754-1763, May 2007.


\bibitem{laneman2}
J. N. Laneman and G. W. Wornell. Distributed space-time coded protocols
for exploiting cooperative diversity in wireless networks. \emph{IEEE Trans. 
Inform. Theory}, vol. 49, no. 10, pp. 2415-2425, Oct. 2003.


\bibitem{molisch}
A. F. Molisch, N. B. Mehta, J. Yedidia, and J. Zhang. 
Performance of fountain codes in collaborative relay networks.
\emph{IEEE Trans. Wireless Comm.}, vol. 6, no. 11, pp. 4108-4119, Nov. 2007.

\bibitem{icc}
S. C. Draper, L. Liu, A. F. Molisch, and J. Yedidia. Routing in cooperative wireless networks with mutual-information accumulation. 
\emph{Proc. of IEEE ICC}, May 2008.




\bibitem{LT}
M. Luby. LT codes. \emph{Proc. 43rd Annual IEEE Symp. Foundations of
Computer Science (FOCS)}, pp. 271-282, 2002.

\bibitem{mitzen}
J. W. Byers, M. Luby, and M. Mitzenmacher. A digital fountain approach to asynchronous reliable
multicast. \emph{IEEE J. Selected Areas Comm.}, vol. 20, no. 8, pp. 1528-1540, Oct. 2002.

\bibitem{Raptor}
A. Shokkrollahi. Raptor codes. \emph{Proc. Int. Symp. Inform. Theory},
2004.

\bibitem{mackay}
D. J. C. MacKay. Fountain codes. \emph{IEE Proceedings Communications}, vol. 152, no. 6, pp. 1062-1068, Dec. 2005.


\bibitem{castura1}
J. Castura and Y. Mao. Rateless coding and relay networks. 
\emph{IEEE Signal Processing Magazine}, vol. 24, no. 5, pp. 27-35, Sept. 2007.


\bibitem{Liu}
X. Liu and T. J. Lim. Fountain codes over fading relay channels. 
\emph{IEEE Trans. Wireless Comm.}, vol. 8, no. 6, pp. 3278-3287, June 2009.




\bibitem{teneketzis}
C. Lott and D. Teneketziz. Stochastic routing in ad-hoc networks.
\emph{IEEE Trans. Automatic Control}, vol. 51, no. 1, pp. 52-70, Jan. 2006.


\bibitem{laufer09}
R. Laufer, H. Dubois-Ferriere, and L. Kleinrock. Multirate anypath routing in wireless mesh networks.
\emph{Proc. IEEE INFOCOM}, April 2009.


\bibitem{divbar}
M. J. Neely and R. Urgaonkar. Optimal backpressure routing in wireless networks with 
multi-receiver diversity. \emph{Ad Hoc Networks (Elsevier)}, vol. 7, no. 5, pp. 862-881, July 2009. 

\bibitem{dubois}
H. Dubois-Ferriere, M. Grossglauser, and M. Vetterli. Least-cost opportunistic routing.
\emph{Allerton Conference}, Sept. 2007. 


\bibitem{khandani}
A. E. Khandani, J. Abounadi, E. Modiano, and L. Zheng. Cooperative routing in static wireless networks.
\emph{IEEE Trans. Comm.}, vol. 55, no. 11, Nov. 2007.


\bibitem{sushant}
S. Sharma, Y. Shi, Y. T. Hou, H. D. Sherali, and S. Kompella. Cooperative communications in multi-hop
wireless networks: Joint flow routing and relay node assignment. \emph{Proc. IEEE INFOCOM}, April 2010.


\bibitem{wiopt_10}
M. Dehghan, M. Ghaderi, and D. Goeckel. Cooperative diversity routing in wireless networks. \emph{Proc. WiOpt},
June 2010.

\bibitem{marjan}
M. Baghaie and B. Krishnamachari. Delay constrained minimum energy broadcast in cooperative
wireless netorks.  \emph{Proc. IEEE INFOCOM}, April 2011.


 
\bibitem{mear}
J. Chen, L. Jia, X. Liu, G. Noubir, and R. Sundaram. Minimum energy accumulative routing in wireless networks.
\emph{Proc. IEEE INFOCOM}, March 2005.

\bibitem{hitch-hiking}
M. Agarwal, J. H. Cho, L. Gao, and J. Wu. Energy efficient broadcast in wireless ad hoc networks with hitch-hiking.
\emph{Proc. IEEE INFOCOM}, March 2004.


\bibitem{maric_broadcast}
I. Maric and R. D. Yates. Cooperative multihop broadcast for wireless networks.
\emph{IEEE J. Selected Areas Comm.}, vol. 22, no. 6, pp. 1080-1088, Aug. 2004.


\bibitem{scaglione}
B. Sirkeci-Mergen and A. Scaglione. On the power efficiency of cooperative broadcast in dense 
wireless networks. \emph{IEEE J. Selected Areas Comm.}, vol. 25, no. 2, pp. 497-507, Feb. 2007.

\bibitem{maric_multicast}
I. Maric and R. D. Yates. Cooperative multicast for maximum network lifetime.
\emph{IEEE J. Selected Areas Comm.}, vol. 23, pp. 127-35, Jan. 2005.



\bibitem{PAR}
R. Yim, N. Mehta, A. Molisch, and J. Zhang. Progressive accumulative routing: Fundamental
concepts and protocol. \emph{IEEE Trans. Wireless Comm.}, vol. 7, no. 11, pp. 4142-4154, Nov. 2008.





















\end{thebibliography}
\end{document}